\newcommand{\nr}[1]{\nameref{#1} (\ref{#1})}
\newcommand{\ffield}{\operatorname{Frac}}
\author{Andrew Critch, UC Berkeley\footnote{This research was supported by the DARPA Deep Learning program (FA8650-10-C-7020)}}
\title{Binary Hidden Markov Models and Varieties}
\date{July 9, 2012}
\let\c@table\c@theorem
\let\c@figure\c@theorem
\def\makethm#1#2{%
  \newaliascnt{#1}{theorem}%
  \newtheorem{#1}[#1]{#2}%
  \aliascntresetthe{#1}%
  \expandafter\providecommand\csname #1autorefname\endcsname{#2}%
}
\theoremstyle{plain}
\theoremstyle{definition}
\theoremstyle{remark}
\newcommand{\one}{\mathbbm{1}}
\begin{document}
\maketitle
\abstract{\small
The technological applications of hidden Markov models have been extremely diverse and successful, including natural language processing, gesture recognition, gene sequencing, and Kalman filtering of physical measurements.  HMMs are highly non-linear statistical models, and just as linear models are amenable to linear algebraic techniques, non-linear models are amenable to commutative algebra and algebraic geometry.

This paper closely examines HMMs in which all the hidden random variables are binary.  Its main contributions are (1) a {\em birational parametrization} for every such HMM, with an explicit inverse for recovering the hidden parameters in terms of observables, (2) a semialgebraic model membership test for every such HMM, and (3) {\em minimal defining equations} for the 4-node fully binary model, comprising 21 quadrics and 29 cubics, which were computed using Gr\"obner bases in the cumulant coordinates of \citeauthor*{SZ11}.  The new model parameters in (1) are rationally identifiable in the sense of \citeauthor*{GSS10}, and each model's Zariski closure is therefore a {\em rational} projective variety of dimension $5$.  Gr\"obner basis computations for the model and its graph are found to be considerably faster using these parameters.
In the case of two hidden states, item (2) supersedes a previous algorithm of \citeauthor*{Sch11} which is only generically defined, and the defining equations (3) yield new invariants for HMMs of all lengths $\geq 4$.  Such invariants have been used successfully in model selection problems in phylogenetics, and one can hope for similar applications in the case of HMMs.
}



\section{Introduction}

The present work is motivated primarily by the problems of {\em model selection} and {\em parameter identifiability}, viewed from the perspective of algebraic geometry.  By beginning with the simplest hidden Markov models (HMMs) --- those where all hidden nodes are binary --- the hope is that eventually a very precise geometric understanding of HMMs can be attained that provides insight into these central problems.  Indeed, most questions about this case are answered by reducing to the case where the visible nodes are also binary.  The history of this and related problems has two main branches of historical lineage: that of hidden Markov models, and that of algebraic statistics.  

Hidden Markov models were developed as statistical models in a series of papers by Leonard E. Baum and others beginning with \citet{BP66}, after the description by \citet{Str60} of the \lq\lq{}forward-backward\rq\rq{} algorithm that would be used for HMM parameter estimation.  
HMMs have been used 
extensively in natural language
processing and speech recognition since the development of DRAGON by \citet{Bak75}.  
As well, since \citet{KMH94} used HMM for gene finding in the DNA of in E. coli bacteria, they have had many applications in genomics and biological sequence alignment; 
see also \citep{Yoo09}.  
Now, HMM parameter estimation is built into the measurement of so many kinds of time-series data that it would be gratuitous to enumerate them.  However, the methods of algebraic statistics are not so old, and the algebraic geometry of these models is far from fully explored.  They are hence an important early example for the theory to investigate.

Algebraic statistics is the application of commutative algebra and algebraic geometry to the study of statistical models, especially those models involving non-linear relations between parameters and observables.  It was first described at length in the monograph {\em Algebraic Statistics} by \citet*{PGR01}\footnote{\citeauthor{PGR01} attribute their interest in the subject to a seminar paper of \citet{DS98} circulated as a manuscript in 1993, which employed Gr\"obner bases to construct Markov random walks.}.
Subsequent introductions to the subject include {\em Algebraic Statistics for Computation Biology} by \citet*{PS05}, and {\em Lectures in Algebraic Statistics} by \citet*{DMS09}.  Also notable is {\em Algebraic Geometry and Statistical Learning Theory} by \citet*{Wat09}, for its focus on the problem of model selection from data.  

To the problem of {\em model selection}, the algebraic analogue is {\em implicitization}, i.e., finding polynomial defining equations for the Zariski closures of binary hidden Markov models.  Such polynomials are called {\em invariants} of the model: if a polynomial $f$ is equal to a constant $c$ at every point on the model (i.e. does not vary with the model parameters), then we encode this equation by calling $f-c$ {\em an invariant}.  Model selection and implicitization are more than simply analogous; polynomial invariants have been used successfully in model selection by \citet{CFS06} and \citet{Eri08} for phylogenetic trees. 

Invariants have been difficult to classify for hidden Markov models, perhaps due to the high codimension of the models.  \citet*{BM05} found many invariants using linear algebra, but did not exhibit any generating sets of invariants, and in fact their search was actually for invariants of a model that was slightly modified from the HMM proper.
\citet{Sch11} found a large family of invariants arising as minors of certain non-abelian Hankel matrices, and was able to verify that such invariants generate the ideal of the 3-node binary HMM, the simplest non-degenerate HMM.  However, this seemed not to be the case for models with $n\geq 4$ nodes: Sch\"onhuth reported on a computation of J. Hauenstein which verified numerically that the $4$-node model was not cut out by the Hankel minors.  

In \ar{sec-def}, we will make use of moment and cumulant coordinates as exposited in \citep{SZ11}, as well as a new coordinate system on the parameter space, to find explicit defining equations for the $4$-node binary HMM.  The shortest quadric and cubic equations are fairly simple; to give the reader a visual sense, they look like this:
\begin{itemize}
\item[] $g_{{2},1}=m_{23}m_{13}-m_{2}m_{134}-m_{13}m_{12}+m_{1}m_{124}$
\item[] $g_{{3},1}=m_{12}^3-2m_{1}m_{12}m_{123}+m_{\emptyset}m_{123}^2+m_{1}^2m_{1234}-m_{\emptyset}m_{12}m_{1234}$
\end{itemize}
Here each $m$ is a moment of the observed probability distribution.  These equations are not generated by Sch\"onhuth's Hankel minors, and so provide a finer test for membership to any binary HMM of length $n\geq 4$ after marginalizing to any 4 consecutive nodes.

To the problem of {\em parameter identifiability}, the algebraic analogue is the generic or global injectivity or finiteness of a map of varieties that parametrizes the model, or in the case of identifying a single parameter, constancy of the parameter on the fibers of the parameterization.  \citet{GSS10} provide an excellent discussion of this topic in the context of identifying causal effects; see also \citep{MED09} for a striking application to identification for ODE models in the biosciences.

In \ar{sec-bir}, for the purpose of parameter identification in binary hidden Markov models, we express the parametrization of a binary HMM as the composition of a dominant and generically finite monomial map $\fq$ and a birationally invertible map $\psi$.  An explicit inverse to $\psi$ is given, which allows for the easy recovery of hidden parameters in terms of observables.  The components of the monomial map are {\em identifiable combinations} in the sense of \citet{MED09}.  The formulae for recovering the hidden parameters are fairly simple when exhibited in a particular order, corresponding to a particular triangular set of generators in a union of lexicographic Gr\"obner bases for the model ideal.  To show their simplicity, the most complicated recovery formula looks like this:
$$u = \frac{m_1m_3-m_2^2+m_{23}-m_{12}}{2(m_3-m_2)}$$
As a corollary, in \ar{sec-intbir} we find that the fibers of $\phi_n$ are generically zero-dimensional, consisting of two points which are equivalent under a ``hidden label swapping'' operation.

\ar{sec-trace} describes how the parametrization of every fully binary HMM, or ``BHMM", can be factored through a particular $9$-dimensional variety called a {\em trace variety}, which is the invariant theory quotient of the space of triples of $2\times 2$ matrices under a simultaneous conjugation action by $SL_2$.  As a quotient, the trace variety is not defined inside any particular ambient space.  However, its coordinate ring, a {\em trace algebra}, was found by \citet{Sib68} to be generated by $10$ elements, which means we can embed the trace variety in $\CC^{10}$.  We prove the main results of \ar{sec-bir} in the coordinates of this embedding.  As a byproduct of this approach, in section \ar{sec-birproof} we find that the Zariski closures of all BHMMs with $n\geq 3$ are birational to each other.

Finally, \ar{sec-app} explores some applications of our results, including model membership testing, classification of identifiable parameters, a new grading on HMMs that can be used to find low-degree invariants, the geometry of equilibrium BHMMs, and HMMs with more than two visible states.

I would like to thank my advisor, Bernd Sturmfels, and postdoctoral mentor, Shaowei Lin, for many helpful conversations and editorial suggestions on this paper.

\section{Definitions}\label{sec-back}

{\bf Important note:} \ In this paper, we will work mostly with BHMMs --- HMMs in which both the hidden and visible nodes are all binary --- because, as will be explained in \ar{sec-hmms}, all our results will generalize to allow $\geq 2$ visible states by reducing to this case.

Throughout, we will be referring to binary hidden Markov {\em processes, distributions, maps, models, varieties,} and {\em ideals}.  Each of these terms is used with a distinct meaning, and effort is made to keep their usages consistent and separate.

\subsection{Binary Hidden Markov processes and distributions}\label{sec-prodis}

A binary hidden Markov process is a statistical process which generates random binary sequences.  It is based on the simpler notion of a binary (and not hidden)  Markov {\em chain} process.  

\begin{defn}\label{def-BMCP} A {\bf Binary Hidden Markov {\em process}} will comprise 5 data: $\pi$, $T$, $E$, and $(H_t,V_t)$.  The pair $(H_t,V_t)$ denotes a jointly random sequence $(H_1,V_1,H_2,V_2,\ldots$) of binary variables, also respectively called {\em hidden nodes} and {\em visible nodes}, with range $\labs$.  Often a bound $n$ on the (discrete) time index $t$ is also given.    The joint distribution of the nodes is specified by the following:

\begin{itemize}
\item A row vector ${\pi = (\pi_0, \pi_1)}$, called the {\em initial distribution}, which specifies a probability distribution on the first hidden node $H_1$ by $\pr(H_1=i)=\pi_i$;
\item A matrix ${T= \bmat{
T_{00} & T_{01}  \\ 
T_{10} & T_{11}}}$, called the {\em transition matrix}, which specifies conditional ``transition'' probabilities by the formula $\pr(H_t=j\, | \, H_{t-1}=i) = T_{ij}$, read as the probability of ``transitioning from hidden state $i$ to hidden state $j$''.\footnote{\citep{Sch11} uses $T$ for different matrices, which I will later denote by $P$.}
\item A matrix ${E= \bmat{
E_{00} & E_{01}  \\ 
E_{10} & E_{11}}}$, called the {\em emission matrix}, which specifies conditional ``emission'' probabilities by the formula $\pr(V_t=j\, | \, H_{t}=i) = E_{ij}$, read as the probability that ``hidden state $i$ emits the visible state $j$''.
\end{itemize}
\end{defn}

To be precise, the parameter vector $\theta=(\pi,T,E)$ determines a probability distribution on the set of sequences of pairs $\left((H_1,V_1)\ldots(H_n,V_n)\right) \in (\labs ^2)^n$, or if no bound $n$ is specified, a compatible sequence of such distributions as $n$ grows.   In applications, only the joint distribution on the visible nodes $(V_1,\ldots,V_n)\in\labs ^n$ is observed, and is called the {\em observed distribution}.  This distribution is given by marginalizing (summing) over the possible hidden states of a BHM process:
\begin{align}
\pr(V=v\, |\, \theta=(\pi, T, E)) &= \sum_{h\in \labs ^n} \pr(h,v | \, \pi,T,E) = \sum_{h\in \labs ^n} \pr(h\, |\, \pi,T)\pr(v\, |\, h,E)\notag\\ 
&= \sum_{h\in \labs ^n} \pi_{h_1}E_{h_1,v_1} \prod_{i=2}^n T_{h_{i-1}h_i}E_{h_i,v_i}\label{eqn-bhmmap}
\end{align}
\begin{defn}\label{bhmdist} A {\bf Binary Hidden Markov {\em distribution}} is a probability distribution on sequences $v\in \labs^n$ of jointly random binary variables $(V_1,\ldots,V_n)$ which arises as the observed distribution of {\em some} BHM {\em process} according to \eqref{eqn-bhmmap}.
\end{defn} 
As we will see in \ar{sec-linpar}, different processes $\proc$ can give rise to the same observed distribution on the $V_t$, for example by permuting the labels of the hidden variables, or by other relations among the parameters.  

Those already familiar with Markov models in some form may note that:
\begin{itemize}{
\item The data $(\pi,T,H_t)$ alone specify what is ordinarily called a binary Markov {\em chain} process on the nodes $H_t$.  In the applications we have in mind, these nodes are unobserved variables.
\item The matrices $T$ and $E$ are assumed to be {\em stationary}, meaning that they are not allowed to vary with the ``time index'' $t$ of $(H_t,V_t)$.
\item The distribution $\pi$ {\em is not} assumed to be {\em at equilibrium}, i.e.\,we {\em do not} assume that $\pi T = \pi$.  This allows for more diverse applications.
}\end{itemize}
\begin{nb}The term ``stationary'' is sometimes also used for a process that is at equilibrium; we will reserve the term ``stationary'' for the constancy of matrices $T$,$E$ over time.\end{nb}

\subsection{Binary Hidden Markov maps, models, varieties, and ideals}\label{sec-mmvi}

Statistical processes come in families defined by allowing their parameters to vary, and in short, the set of probability distributions that can arise from the processes in a given family is called a {\em statistical model}.  The Zariski closure of such a model in an appropriate complex space is an algebraic variety, and the geometry of this variety carries information about the purely algebraic properties of the model.

In a binary hidden Markov process, $\pi$, $T$, and $E$ must be {\em stochastic matrices}, i.e. each of their rows must consist of non-negative reals which sum to $1$, since these rows are probability distributions.  We denote by $\tsim$ the set of such triples $(\pi,T,E)$, which is isometric to the 5-dimensional cube $(\Delta_1)^5$.  We call $\tsim$ the space of {\em stochastic parameters}.  It is helpful to also consider the larger space of triples $(\pi,T,E)$ where the matrices can have arbitrary complex entries with row sums of $1$.  We write $\taff$ for this larger space, which is equal to complex Zariski closure of $\tsim$, and call is the space of {\em complex parameters}. 

{\em We will not simply replace $\tsim$ by $\taff$ for convenience}, as has sometimes been done in algebraic phylogenetics.  For the ring of polynomial functions on these spaces, we write
\begin{align}
\tring &:= \CC[\pi_j, T_{ij}, E_{ij}] \bigg/ \Big(1=\sum_j \pi_j = \sum_j T_{ij} = \sum_j E_{ij}\te{ for }i=0,1\Big)\notag
\end{align}
so as to make the identification $\tsim \sbs \taff = \spec \tring \notag$.  Here $\spec$ denotes the spectrum of a ring; see \citep*{CLO07} for this and other fundamentals of algebraic geometry.

Now we a fix a length  $|v|=n$ for our binary sequences $v$, and write 
\begin{align*}
\pring{n} &:= \CC[p_v \, | \, v\in\labs ^n] & \paff{n} &:= \spec(\pring{n})\\
\spring{n} &:= \pring{n} \big/ (1-\sum_{|v|=n} p_v) & \spaff{n} &:= \spec(\spring{n}) \\[-2ex]
&& \ppro{n} &:=\proj(\pring{n})
\end{align*}
We will often have occasion to consider the natural inclusions,
\begin{align*}
\iota_n: \spaff{n} &\hra \paff{n} & \ol{\iota}_n : \spaff{n} &\hra \ppro{n}
\end{align*}

\begin{conv} Complex spaces such as $\CC^{2^n}$ will usually be decorated with a subscript to indicate the intended coordinates to be used on that space, like the $p$ in $\paff{n}$ above.  Likewise, a ring will usually be denoted by $R$ with some subscripts to indicate its generators.
\end{conv}

\begin{defn}\label{def-bhmmap}For $n\geq 3$,
\begin{itemize}
\item The {\bf Binary Hidden Markov {\em map}} or {\em modeling map} on $n$ nodes is the map $\bhmf{n}$, or simply $\phi_n$, given by given by \eqref{eqn-bhmmap}, i.e.
$$\phi_n: \taff \to \spaff{n},$$
$$\phi_n\us(p_v) := \sum_{h\in \labs ^n} \pi_{h_1}E_{h_1,v_1} \prod_{i=2}^n T_{h_{i-1}h_i}E_{h_i,v_i}$$

The word ``model'' is also frequently used for the map $\phi_n$.  This is a very reasonable usage of the term, but I reserve ``model\rq\rq{} for the image of the allowed parameter values:

\item $\bhmm{n}$, the {\bf Binary Hidden Markov {\em model}} on $n$ nodes, is the image $$\ol\iota_n\phi_n\left(\tsim\right)\sbs \ppro{n},$$ of the stochastic parameter space $\tsim$, \ie, the set of observed distributions which can arise from {\em some} BHM process, considered as a subset of $\ppro{n}$ via $\ol\iota_n$.  Being the continuous image of the classically compact cube $\tsim\simeq \Delta_1^5$, $\bhmm{n}$ is also classically compact and hence classically closed.

\item $\bhmv{n}$, the {\bf Binary Hidden Markov {\em variety}} on $n$ nodes, is the Zariski closure of $\bhmm{n}$, or equivalently the classical closure of $\phi_n(\taff)$, in $\ppro{n}$.  

\item $\bhmi{n}$, the {\bf Binary Hidden Markov {\em ideal}} on $n$ nodes, is the set of homogeneous polynomials which vanish on $\bhmm{n}$, \ie,  the homogeneous defining ideal of $\bhmv{n}$.  Elements of $\bhmi{n}$ are called {\em invariants} of the model.
\end{itemize}
\end{defn}

In summary, probability distributions arise from processes according to modeling maps, models are families of distributions arising from processes of a certain type, and the Zariski closure of each model is a variety whose geometry reflects the algebraic properties of the model.  The ideal of the model is the same as the ideal of the variety: the definition of Zariski closure is the largest set which has the same ideal of vanishing polynomials as the model.  In a rigorous sense (namely, the anti-equivalence of the categories of affine schemes and rings), the variety encodes information about the ``purely algebraic'' properties of the model, i.e. properties that can be stated by the vanishing of polynomials.  

The number of polynomials that vanish on any given set is infinite, but by the Hilbert Basis theorem, one can always find finitely many polynomials whose vanishing implies the vanishing of all the others.  This is called a {\em generating set} for the ideal.  To compute a generating set for $\bhmi{n}$, we will need the following proposition:

\begin{prop}\label{res-hom} The ideal $\bhmi{n}$ is the homogenization of $\ker (\phi_n\us\circ \iota_n\us)$ with respect to $p_\Sigma := \sum_{|v|=n} p_v$

\end{prop}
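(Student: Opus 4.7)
The plan is to realize $\bhmv{n}$ as the projective closure in $\ppro{n}$ of the affine variety $V(J) \subset \paff{n}$, where $J := \ker(\phi_n\us \circ \iota_n\us)$, and then to apply the classical fact that the homogeneous ideal of such a projective closure equals the homogenization of $J$ with respect to the linear form cutting out the chart at infinity.

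Since $\phi_n$ maps into $\spaff{n}$ by construction, the image $\ol\iota_n\phi_n(\taff)$ lies inside the affine chart $U := \{p_\Sigma \neq 0\} \subset \ppro{n}$, which $\ol\iota_n$ identifies with $\spaff{n}$ (the standard identification of the hyperplane $\{p_\Sigma = 1\} \subset \paff{n}$ with $U$). The ideal $J$ is prime (since $\tring \cong \CC[\pi_0, T_{00}, T_{10}, E_{00}, E_{10}]$ is a polynomial ring, hence a domain), and by the standard morphism-kernel correspondence it is the vanishing ideal of the irreducible affine variety $V(J) = \overline{\iota_n \phi_n(\taff)}^{\paff{n}}$, which sits inside $\{p_\Sigma = 1\}$. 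Because $\bhmm{n} \subset U$ is Zariski-dense in $\bhmv{n}$, the variety $\bhmv{n}$ is exactly the projective closure of $V(J)$ under the chart embedding $\spaff{n} \cong U \hookrightarrow \ppro{n}$.

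Finally, I would apply the classical projective-closure theorem: if $V(\mathfrak{a}) \subset \mathbb{A}^n = \{x_0 = 1\} \subset \mathbb{P}^n$ has ideal $\mathfrak{a}$, then its projective closure has homogeneous ideal $\mathfrak{a}^h$, the homogenization with respect to $x_0$. The main obstacle in transcribing this into our setting is that the ``homogenizing variable'' is the linear form $p_\Sigma = \sum_v p_v$ rather than a single coordinate of $\pring{n}$; this is handled by an elementary linear change of variables on $\pring{n}$ that promotes $p_\Sigma$ to a genuine coordinate (for example, pick any $v_0$ and use the basis $\{p_\Sigma\} \cup \{p_v : v \neq v_0\}$ of the linear forms on $\paff{n}$). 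In the new coordinates the theorem applies verbatim, and translating back gives $\bhmi{n} = J^h$, as claimed.
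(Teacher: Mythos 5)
Your overall route is the same as the paper's: identify $J=\ker(\phi_n\us\circ\iota_n\us)$ as the vanishing ideal of the affine closure $X$ of $\iota_n\phi_n(\taff)$ inside the hyperplane $\{p_\Sigma=1\}$, identify $\bhmv{n}$ with the projective closure of $X$ in $\ppro{n}$, and then invoke the classical fact that the homogeneous ideal of a projective closure is the homogenization of the affine vanishing ideal. Your explicit change of linear coordinates promoting $p_\Sigma$ to an honest variable is a correct and useful elaboration of a step the paper simply asserts, and your primeness remark via $\tring$ being a domain is fine (though not needed for the homogenization statement).

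The one step that does not go through as written is the sentence ``Because $\bhmm{n}\subset U$ is Zariski-dense in $\bhmv{n}$, the variety $\bhmv{n}$ is exactly the projective closure of $V(J)$.'' Density of $\bhmm{n}$ in $\bhmv{n}$ is just the definition of $\bhmv{n}$, and since $\bhmm{n}\subset V(J)\cap U$ it only gives the inclusion $\bhmv{n}\subseteq\overline{V(J)}$. The reverse inclusion is the nontrivial point: $V(J)$ is the closure of the image of the full \emph{complex} parameter space $\taff$, whereas $\bhmm{n}$ is the image of the stochastic cube $\tsim$ only, so you must argue that $\iota_n\phi_n(\taff)$ lies in the closure of $\bhmm{n}$. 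This is exactly where the paper's proof uses that $\tsim$ is Zariski dense in $\taff$, so that $\iota_n\phi_n(\tsim)$ is dense in $\iota_n\phi_n(\taff)$, hence in $X$, hence in its projective closure, forcing that closure to equal $\bhmv{n}$. (Citing instead the clause in the definition of $\bhmv{n}$ identifying it with the closure of $\phi_n(\taff)$ just relocates the same density argument.) Adding that one line closes the gap; the rest of your argument is sound and matches the paper's.
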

\begin{proof}
The affine ideal $\ker (\phi_n\us\circ \iota_n\us)$ cuts out the Zariski closure $X$ of $\iota_n\circ\phi_n(\taff)$ in $\paff{n}$, and this closure lies in the hyperplane $\{p_\Sigma=1\} = \spaff{n}$.
Let $X'$ be the projective closure of $X$ in $\ppro{n}$, so that $I(X')$ is the homogenization of $\ker (\phi_n\us\circ \iota_n\us)$ with respect to $p_\Sigma$.

The cube $\tsim$ is Zariski dense in $\taff$, so $\iota_n\circ\phi_n(\tsim)$ is Zariski dense in $\iota_n\circ\phi_n(\taff)$, which is Zariski dense in $X$, which is Zariski dense in $X'$.  Therefore $X'=\bhmv{n}$, and $I(X') = \bhmi{n}$, as required.
\end{proof}

\subsection{HMMs with more visible states via BHMM(n)}\label{sec-hmms}
All the results of this paper apply to HMMs with more than two visible states, using the following trick.  Consider $\hmm{2,k,n}$, an HMM with $2$ hidden states, $k$ visible states $\alpha_1\ldots\alpha_k$, and $n$ (consecutive) visible nodes.  Such a hidden Markov process can be specified by a $2\times k$ matrix $E$ of emission probabilities, along with a $1\times 2$ matrix $\pi$ and a $2\times 2$ matrix $T$ describing the two-state hidden Markov chain as in \eqref{eqn-pite}.  For each $\ell\in\{1\ldots,k\}$, we have a way to interpret this process as a BHM process by letting $\alpha_j=1$ and $\alpha_i=0$ for $i\neq j$.  The resulting binary emission matrix is 
$${E'(\ell)= \bmat{
1-E_{0\ell} & E_{0\ell}  \\ 
1-E_{1\ell} & E_{1\ell}}},$$
so as $\ell$ varies, we obtain all the entries $E_{ij}$ as entries of an $E'(\ell)$.  We shall remark throughout when results can be generalized to $\hmm{2,k,n}$ using this trick.

\section{Defining equations of $\bhmv{3}$ and $\bhmv{4}$} \label{sec-def}

\begin{thm}\label{res-bhmi4}
The homogeneous ideal $\bhmi{4}$ of the binary hidden Markov variety $\bhmv{4}$ is minimally generated by 21 homogeneous quadrics and 29 homogeneous cubics.
\end{thm}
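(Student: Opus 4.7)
The plan is to reduce this theorem to an explicit implicitization computation and then to a minimality check on the resulting graded ideal.

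First, by \nr{res-hom}, $\bhmi{4}$ is the $p_\Sigma$-homogenization of $\ker(\phi_4\us\circ\iota_n\us)$, so it suffices to implicitize $\phi_4$ on the affine slice $\spaff{4}$ and then homogenize. Writing out the parametrization via \eqref{eqn-bhmmap} gives $16$ polynomial coordinate functions of degree $7$ in the $5$ stochastic parameters, and the target has Krull dimension $5$ inside a $15$-dimensional ambient $\spaff{4}$; hence the ideal we seek cuts out a codimension-$10$ variety. A direct Buchberger-type elimination in the $p_v$ coordinates is at the edge of feasibility because of the high codimension and the degree of the parametrizing polynomials.

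The key computational step is therefore a change of coordinates: pass to the cumulant coordinates of \citet{SZ11}, in which the HMM parametrization becomes much sparser and of lower total degree, compute a Gröbner basis of the elimination ideal there (using a block order that eliminates $\pi, T, E$), and translate the result back to moment coordinates $m_I$, where the low-degree invariants take compact forms such as the sample $g_{2,1}$ and $g_{3,1}$ already displayed. Alternatively, one can first factor $\phi_4$ through the new birational parametrization constructed in \autoref{sec-bir} — the author notes in the abstract that Gröbner basis computations for the model and its graph are considerably faster with these parameters — and eliminate the $5$ new parameters instead.

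Once a generating set is in hand, minimality is read off from the graded Betti numbers of $\bhmi{4}$, or equivalently by computing, for each degree $d$, the dimension of $(\bhmi{4})_d$ and subtracting the dimension of the degree-$d$ part of the subideal generated by forms of degree $<d$. This difference gives the number of minimal generators in degree $d$, and the claim is that these differences are $21$ in degree $2$, $29$ in degree $3$, and $0$ in all higher degrees. The main obstacle is purely computational: making the Gröbner basis tractable. Once the change of coordinates has tamed that step, the counting of minimal generators is mechanical.
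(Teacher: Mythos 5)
Your route is essentially the one the paper follows: use \autoref{res-hom}/\autoref{res-mhom} to reduce to implicitizing the affine map, do the kernel computation in cumulant coordinates (or, faster, through the birational parameters of \autoref{sec-bir}), translate back to moments, homogenize, and read off minimal generator counts degree by degree. Two points deserve correction or more care, one of them substantive.

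The substantive one is the phrase ``and then homogenize.'' By \autoref{res-mhom}, $\bhmi{4}$ is the homogenization \emph{of the ideal} $\ker(\phi_4^\#)$ with respect to $m_\emptyset$, and this is not obtained by simply homogenizing whatever finite generating set you happen to hold. Homogenizing a generating set only yields the homogenized ideal when that set is a Gr\"obner basis for a degree-compatible order; your generators in moment coordinates are the images under the nonlinear map $\kappa_4^{-\#}$ of a basis computed in cumulant coordinates (with an elimination order, which is not degree-compatible in any case), so the Gr\"obner property is destroyed and naive homogenization can produce a strictly smaller ideal. The missing step is to saturate the ideal generated by the homogenized generators with respect to $m_\emptyset$; in the paper's computation this saturation is the single most expensive remaining operation (about 29 minutes, versus seconds for the kernel via the birational parameters), and it is exactly what produces the final minimal set of 21 quadrics and 29 cubics. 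Once you add that step, your minimality argument (graded Nakayama: count minimal generators in each degree and check nothing new appears in degrees $\geq 4$) is the right one and matches the paper's corollary. A minor slip: the coordinate functions $\phi_4^\#(p_v)$ have degree $8$ in the matrix entries (one $\pi$, four $E$'s, three $T$'s), not $7$; this does not affect the argument.
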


Since \citet{Sch11} found numerically that his Hankel minors did not cut out $\bhmm{4}$ even set-theoretically, these equations are genuinely new invariants of the model.  Moreover, they are not only applicable to $\bhmm{4}$, because a BHM process of length  $n>4$ can be marginalized to any $4$ consecutive hidden-visible node pairs to obtain a BHM process of length $4$.  Thus, we have $n-3$ linear maps from $\bhmm{n}$ to $\bhmm{4}$, each of which allows us to write $21$ quadrics and $29$ cubics which vanish on $\bhmm{n}$.  Finally, using \ar{sec-hmms}, we can even obtain invariants of $\hmm{2,k,n}$ via the $k$ different reductions to $\bhmm{n}$.

Our fastest derivation of \ar{res-bhmi4} in Macaulay2 \citep*{M2} uses the birational parametrization of \ar{sec-bir}, but in only a single step, so we defer the lengthier discussion of the parametrization until then.  Modulo this dependency, the proof is described in \ar{sec-der}, using moment coordinates (\ar{sec-mom}) and cumulant coordinates (\ar{sec-cum}).

In probability coordinates, the generators found for $\bhmi{4}$ had the following sizes:

\begin{itemize}

\item 
Quadrics $g_{2,1},\ldots,g_{2,21}$: respectively $8, 8, 12, 14, 16, 21, 24, 24, 26, 26, 28, 32, 32, 41,$\\
$42, 43, 43, 44, 45, 72, 72$ probability terms.
\item Cubics $g_{3,1},\ldots,g_{3,29}$: respectively $32, 43, 44, 44, 44, 52, 52, 56, 56, 61, 69, 71, 74, 76, 78,$\\
$ 81, 99, 104, 109, 119, 128, 132, 148, 157, 176, 207, 224, 236, 429$ probability terms.
\end{itemize}
As a motivation for introducing moment coordinates, we note here that these generators have considerably fewer terms when written in terms of moments:
\begin{itemize}
\item Quadrics $g_{2,1},\ldots,g_{2,21}$: respectively $4, 4, 4, 4, 6, 6, 6, 6, 6, 6, 6, 6, 8, 8, 8, 8, 8, 10, 10, 10, 17$ moment terms.
\item Cubics $g_{3,1},\ldots,g_{3,29}$: respectively $5, 6, 6, 6, 6, 6, 6, 6, 6, 6, 6, 8, 8, 8, 8, 10, 10, 10, 10, 10, 12,$\\$ 12, 13, 14, 16, 18, 21, 27, 35$ moment terms.
\end{itemize}
To give a sense of how these polynomials look in moment coordinates, the shortest quadric and cubic are
\begin{itemize}
\item $g_{{2},1}=m_{23}m_{13}-m_{2}m_{134}-m_{13}m_{12}+m_{1}m_{124}$, and
\item $g_{{3},1}=m_{12}^3-2m_{1}m_{12}m_{123}+m_{\emptyset}m_{123}^2+m_{1}^2m_{1234}-m_{\emptyset}m_{12}m_{1234}$.
\end{itemize}
Let us compare this ideal with $\bhmi{3}$, the homogeneous defining ideal of $\bhmv{3}$.  \citet{Sch11} found that $\bhmi{3}$ is precisely the ideal of  $3\times 3$ minors of the following matrix: 
\begin{equation} \label{eqn-pmat} A_{3,3}=\bmat{
p_{000}+p_{001} & p_{000} & p_{100} \\ 
p_{010}+p_{011} & p_{001} & p_{101} \\ 
p_{100}+p_{101} & p_{010} & p_{110} \\ 
p_{110}+p_{111} & p_{011} & p_{111}}
\end{equation}

Sch\"onhuth defines an analogous matrix $A_{n,3}$ for $\bhmv{n}$, but then remarks that J. Hauenstein has found, using numerical rank deficiency testing \citep{BHPS10} with the algebraic geometry package Bertini \citep{Ber}, that $\minors_3(A_{n,3})$ does not cut out $\bhmv{n}$ when $n=4$.  In general, Sch\"onhuth shows that $\bhmi{n} =(\minors_3(A_{n,3}):\minors_2(B_{n,2}))$ for a particular $2\times 3$ matrix $B_{n,2}$, but computing generators for this colon ideal is a costly operation, and so no generating set for $\bhmi{n}$ was not found for any $n\geq 4$ by this method.  Instead, here we will make use of {\em moment coordinates} and {\em cumulant coordinates} as exposited in \citep{SZ11}.

\subsection{Moment coordinates}\label{sec-mom}

Moments are particular linear expressions in probabilities.  They can be derived from a moment generating function as in \citep{SZ11}, but in our case, moments can be expressed simply by the following rule: we order $\labs ^n$ by strict dominance, \ie $v\geq w$ iff $v_i \geq w_i$ for all $i$, and  then
\begin{equation} \label{eqn-mom} m_v := \sum_{w\geq v} p_w \in \pring{n} \end{equation}
Since all our variables are binary, with the usual algebraic statistical convention that a ``$+$'' subscript denotes an index to be summed over, we can view the conversion from moments to probabilities as ``replacing zeros by $+$ signs".  For example, $m_{10010} = p_{1++1+}. $ The ring elements $m_v\in \pring{n}$ provide alternative linear coordinates on $\ppro{n}$ in which it turns out that some previously intractable BHM computations are simplified and become feasible.

For a more compact notation, a binary string $v$ of length $n$ is the indicator function of a unique subset $I$ of $[n]=\{1,\ldots,n\}$, so we also write $m_I$ to represent $m_v$.  For example, $m_{0000} = m_\emptyset$, $m_{1000} = m_{1}$, and $m_{0101} = m_{24}$.  
From \eqref{eqn-mom} we can see that $m_I$ actually represents a marginal probability: $m_I = \pr(V_i = 1\textrm{ for all } i \in I)$.  Thus, in the context of BHMMs , no confusion results if we write $m_I$ without specifying the value of $n$.  To be precise, if $I\sbs [n]$ and $I'$ denotes $I$ considered as a subset of $[n']$ for some $n'>n$, then
\begin{equation}\label{eqn-subs}
\phi_n^\#(m_I) = \phi_{n'}^\#(m_{I'})
\end{equation}
This can be seen in many ways, for example using the \nameref{res-baum} (\ar{res-baum}) as explained in \autoref{sec-trunc}.

Just as for probabilities, for moments we define rings and spaces
\begin{align}
\notag \mring{n} &:= \CC[m_I \, | \, I\sbs[n]] & \notag \maff{n} &:= \spec(\mring{n})\\
\label{eqn-rbar}\smring{n} &:= \mring{n} \big/ \langle 1-m_\emptyset \rangle & \smaff{n} &:= \spec(\smring{n})\\
&& \notag \mpro{n} &:=\proj(\mring{n}),
\end{align}
To avoid having notation for too many ring isomorphisms, we adopt:
\begin{conv}\label{conv-mom}Using \eqref{eqn-mom}, we will usually treat $m_I$ as a literal element of $\pring{n}$, thus creating literal identifications 
\begin{equation}\label{eqn-idents}
\mring{n} = \pring{n}, \quad \smring{n} = \spring{n}, \quad \maff{n} = \paff{n}, \quad \mpro{n} = \ppro{n}, \quad \te{and} \quad \smaff{n} = \spaff{n}.
\end{equation}
\end{conv}
Note that, for example, we obtain natural ring inclusions 
\begin{equation*}
\mring{n} \sbs \mring{n'}
\end{equation*}
whenever $n<n'$, which respect the BHM maps $\phi_n$ because of \eqref{eqn-subs}.

As a first application of moment coordinates, we have 
\begin{prop}\label{res-bhmv3} The homogeneous ideal $\bhmi{3}$ is generated in moment coordinates by the $3\times 3$ minors of
the matrix
$$ 
A'_{3,3}=\bmat{
m_{000} & m_{000} & m_{100} \\ 
m_{010} & m_{001} & m_{101} \\ 
m_{100} & m_{010} & m_{110} \\ 
m_{110} & m_{011} & m_{111}}
=\bmat{
m_\es & m_\es & m_{1} \\ 
m_{2} & m_{3} & m_{13} \\ 
m_{1} & m_{2} & m_{12} \\ 
m_{12} & m_{23} & m_{123}}
$$ 
In particular, the projective variety $\bhmv{3}$ is cut out by these minors.
\end{prop}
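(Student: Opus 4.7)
The plan is to leverage Sch\"onhuth's result (cited just above) that $\bhmi{3}$ is generated in probability coordinates by the $3\times 3$ minors of the matrix $A_{3,3}$ from \eqref{eqn-pmat}. By \autoref{conv-mom}, both $A_{3,3}$ and $A'_{3,3}$ live in the common ring $\pring{3} = \mring{3}$, so it suffices to prove that they have the same ideal of $3\times 3$ minors. Since the ideal of maximal minors of a matrix is invariant under invertible $\CC$-linear row and column operations, the task reduces to exhibiting invertible matrices $L \in \mathrm{GL}_4(\CC)$ and $M \in \mathrm{GL}_3(\CC)$ with $A'_{3,3} = L \cdot A_{3,3} \cdot M$.

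To construct $L$ and $M$, I would interpret each column of the two matrices probabilistically. The successive columns of $A_{3,3}$ are the joint probability vectors $(p_{V_1 V_2 +})$, $(p_{0 V_2 V_3})$, and $(p_{1 V_2 V_3})$: the probability vector of the marginal $(V_1,V_2)$, of $(V_2,V_3)$ intersected with the event $\{V_1 = 0\}$, and of $(V_2,V_3)$ intersected with $\{V_1 = 1\}$. The successive columns of $A'_{3,3}$ are the corresponding moment vectors of $(V_1,V_2)$, of $(V_2,V_3)$ unrestricted, and of $(V_2,V_3)$ intersected with $\{V_1 = 1\}$. Because the rows of both matrices are indexed by a pair of binary variables in lexicographic order, the passage from probability vector to moment vector is given in all three columns by the same $4 \times 4$ unitriangular matrix $L$ that encodes $m_I = \sum_{J \supseteq I} p_J$. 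Moreover, the unrestricted distribution of $(V_2,V_3)$ is the sum of its two $V_1$-restrictions, so column 2 of $A'_{3,3}$ equals $L$ applied to the sum of columns 2 and 3 of $A_{3,3}$. These observations assemble into the factorization $A'_{3,3} = L \cdot A_{3,3} \cdot M$, where $M$ is the unitriangular column operation replacing column 2 with column 2 + column 3.

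Since $L$ and $M$ are both unitriangular, hence invertible, the ideal of $3\times 3$ minors of $A'_{3,3}$ equals that of $A_{3,3}$, and this common ideal is $\bhmi{3}$ by Sch\"onhuth's theorem. The remaining claim that $\bhmv{3}$ is cut out by these minors is then immediate from the definition of $\bhmv{3}$ as the projective variety of $\bhmi{3}$. The only real obstacle is keeping the row and column indexings of the two matrices aligned; once each entry is given its probabilistic meaning, the matching and the resulting linear algebra are routine.
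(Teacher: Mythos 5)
Your proposal is correct and matches the paper's own argument, which likewise deduces $\minors_3 A'_{3,3}=\minors_3 A_{3,3}=\bhmi{3}$ from the fact that $A'_{3,3}$ is obtained from $A_{3,3}$ by invertible row and column operations (you simply make the operations explicit: the zeta/M\"obius matrix on the rows and adding column 3 to column 2). The explicit factorization $A'_{3,3}=L\,A_{3,3}\,M$ checks out, so no gap.
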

\begin{proof}
Observe that Sch\"onhuth's matrix $A_{3,3}$ in \eqref{eqn-pmat} is equivalent under elementary row/column operations to $A'_{3,3}$, so $\minors_3 A'_{3,3}=\minors_3 A_{3,3}=\bhmi{3}$.\end{proof}

\begin{prop}\label{res-mhom} The ideal $\bhmi{n}$ is the homogenization of $\ker (\phi_n\us)$ with respect to $m_\es$.
\end{prop}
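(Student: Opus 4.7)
The plan is to deduce this directly from Proposition \ref{res-hom} by invoking Convention \ref{conv-mom}, which identifies the probability and moment coordinate systems literally. The whole statement should then be a bookkeeping reformulation, once we notice that the moment $m_\emptyset$ plays the role of the sum $p_\Sigma$.

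First I would verify this key identification. Under the identifications in (\ref{eqn-idents}), the element $m_\emptyset \in \mring{n}$ coincides with $p_\Sigma = \sum_{|v|=n} p_v \in \pring{n}$: applying the rule (\ref{eqn-mom}) to $I = \emptyset$, i.e.\ to $v = 0\cdots 0$, every $w \in \labs^n$ dominates $0\cdots 0$, so $m_\emptyset = \sum_{w \geq 0\cdots 0} p_w = p_\Sigma$. Consequently, the comorphism $\iota_n^{\#} : \mring{n} \twoheadrightarrow \smring{n}$ is the quotient by $1 - m_\emptyset$.

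Next I would align the two kernels. Since $\ker(\phi_n^{\#}\circ \iota_n^{\#}) = (\iota_n^{\#})^{-1}(\ker \phi_n^{\#})$, this ideal is the preimage in $\mring{n}$ of $\ker \phi_n^{\#} \subset \smring{n}$, and in particular it contains $1 - m_\emptyset$ as a distinguished element. By Proposition \ref{res-hom}, $\bhmi{n}$ is the homogenization of this preimage with respect to $p_\Sigma = m_\emptyset$. It then remains to check that this agrees with the homogenization of $\ker \phi_n^{\#}$ itself with respect to $m_\emptyset$. Geometrically this is transparent: both homogeneous ideals cut out the projective closure in $\mpro{n}$ of the same affine variety $V(\ker \phi_n^{\#}) \subset \smaff{n}$, viewed through the affine chart $\{m_\emptyset \neq 0\}$. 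Algebraically, the extra generator $1 - m_\emptyset$ of the preimage homogenizes to $m_\emptyset - m_\emptyset = 0$, so it contributes nothing, while any other representative of a class in $\ker \phi_n^{\#}$ differs from a fixed lift by a multiple of $1 - m_\emptyset$ and thus has the same homogenization.

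There is no genuine obstacle, and the proposition really is a translation: the only mild care required is in matching the two notions of homogenization (upstairs in $\mring{n}$ applied to the preimage, versus applied directly to classes in $\smring{n}$), which the vanishing of $(1-m_\emptyset)^{h}$ resolves.
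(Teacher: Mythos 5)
Your proposal is correct and takes essentially the same route as the paper's own proof: invoke Proposition~\ref{res-hom}, observe that $p_\Sigma=\sum_{|v|=n}p_v=m_{\emptyset}$, identify $\ker(\phi_n^{\#}\circ\iota_n^{\#})$ with $\ker(\phi_n^{\#})+\langle 1-m_{\emptyset}\rangle$, and note that the extra generator homogenizes to zero, so the two homogenizations coincide. (Only a cosmetic caveat, at the same level of informality as the paper itself: a lift differing by a multiple of $1-m_{\emptyset}$ has homogenization equal to the original one times a power of $m_{\emptyset}$, not literally the same polynomial, but this still lies in the same homogenized ideal, so the conclusion is unaffected.)
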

\newcommand{\kerphiiota}{\ker(\phi_4^\# \circ \iota_4^\#)}
\newcommand{\kerphi}{\ker(\phi_4^\#)}

\begin{proof}
From \autoref{res-hom} we know that $\bhmi{n}$ is the homogenization of $\ker (\phi_n\us\circ \iota_n\us)$ with respect to $m_\es=\sum_{|v|=n} p_v$.  From \eqref{eqn-rbar}, we can identify $\smring{4}$ with the polynomial subring of $\mring{4}$ obtained by omitting $m_\es$, so that 
$\kerphiiota =  \kerphi+ \langle 1-  m_{\es} \rangle$. 
Since the additional generator $1- m_\es $ homogenizes to $0$, $\kerphi$ has the same homogenization as $\kerphiiota$, hence the result.
\end{proof}

\subsection{Cumulant coordinates}\label{sec-cum}
Cumulants are non-linear expressions in moments or probabilities which seem to allow even faster computations with binary hidden Markov models.  Let 
\newcommand{\kring}[1]{R_{k,{#1}}}
\newcommand{\skring}[1]{\ol{R}_{k,{#1}}}
\newcommand{\skaff}[1]{\CC^{2^{#1}-1}_k}
\begin{align*}
\kring{n} &:=\CC[k_I \, | \, I\sbs [n]]\\
\skring{n} &:= \kring{n} \big/ \langle k_\es \rangle\\
\skaff{n} &:= \spec(\skring{n})
\end{align*}
where, as with moments, we may freely alternate between writing $k_v$ and writing $k_I$, where $I$ is the set of positions where $1$ occurs in $v$.  For building generating functions, let $x_1,\ldots,x_n$ be indeterminates, and write $x^v=x^I$ for $x_1^{v_1}\cdots x_n^{v_n} = \prod_{i\in I} x_i$.  Let $J$ be the ideal generated by all the squares $x_i^2$.  Following \citep{SZ11}, we define the {\em moment } and {\em cumulant generating functions}, respectively, as
$$f_m(x) := \sum_{I\sbs [n]}m_I x^I \in \smring{n}[x]/J\qquad \qquad
f_k(x) := \sum_{I\sbs [n]}k_I x^I \in \skring{n}[x]/J
$$
We now define changes of coordinates
$$
\kappa_n: \smaff{n} \to \skaff{n}\qquad \qquad
\kappa_n\-: \skaff{n} \to\smaff{n}
$$
by the formulae
\begin{align}
\label{eqn-cum} \kappa_n^\# (f_k) &= \log(f_m) = \frac{(f_m-1)}{1} + \cdots + (-1)^{n+1}\frac{(f_m-1)^n}{n}\\
\notag \kappa_n^{-\#}(f_m) &= \exp(f_k) = 1+\frac{(f_k)}{1} + \cdots + \frac{(f_k)^n}{n!} 
\end{align}
That is, we let $\kappa_n^\#(k_I)$ be the coefficient of $x^I$ in the Taylor expansion of $\log{f_m}$ about $1$, and let $\kappa_n^{-\#}(m_I)$ be the coefficient of $x^I$ in the Taylor expansion of $\exp{f_k}$ about $0$. Note that in the relevant coordinate rings $\smring{n}$ and $\skring{n}$, $m_{\es}=1$ and $k_{\es}=0$.  This is why we only need to compute the first $n$ terms of each Talyor expansion: the higher terms all vanish modulo the ideal $J$.  
\newcommand{\kerphik}{{\bf I}_{k,4}}
\newcommand{\kerphim}{{\bf I}_{m,4}}
\begin{prop}The expressions $\kappa_n^\#(k_I)$ and $\kappa_n^{-\#}(m_I)$, i.e. writing of cumulants in terms of moments and conversely, do not depend on $n$.
\end{prop}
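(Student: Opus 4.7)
The plan is to derive an explicit closed form for $\kappa_n^\#(k_I)$ (and for $\kappa_n^{-\#}(m_I)$) as a polynomial in the variables $\{m_J : J \subseteq I\}$ (respectively $\{k_J : J \subseteq I\}$) whose coefficients are defined purely in terms of ordered set partitions of $I$, with no reference to $n$. Once such a formula is in hand, the independence from $n$ is immediate, because enlarging the ambient set from $[n]$ to $[n']$ with $n' > n$ extends the polynomial ring $\smring{n}[x]/J$ by new generators $x_j$, $m_L$ (with $L \not\subseteq [n]$), but none of them appear in the formula for $\kappa_n^\#(k_I)$ when $I \subseteq [n]$.

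The key observation is the behaviour of products of squarefree monomials modulo $J = (x_1^2,\ldots,x_n^2)$: for any collection $J_1,\ldots,J_i \subseteq [n]$,
\[
x^{J_1} x^{J_2}\cdots x^{J_i} \;=\; \begin{cases} x^{J_1 \cup \cdots \cup J_i} & \text{if the }J_\ell\text{ are pairwise disjoint,}\\ 0 & \text{otherwise,}\end{cases}
\]
in $\smring{n}[x]/J$. First I would apply this to expand $(f_m-1)^i = \bigl(\sum_{\emptyset \neq L\subseteq[n]} m_L x^L\bigr)^i$ and collect the coefficient of $x^I$. The surviving terms are indexed by ordered partitions of $I$ into $i$ nonempty blocks $(J_1,\ldots,J_i)$, and contribute $m_{J_1}\cdots m_{J_i}$. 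Substituting into \eqref{eqn-cum} and noting that any ordered partition of $I$ has at most $|I|$ blocks gives
\[
\kappa_n^\#(k_I) \;=\; \sum_{i=1}^{|I|} \frac{(-1)^{i+1}}{i} \sum_{\substack{(J_1,\ldots,J_i)\\ J_\ell\neq\emptyset,\ \sqcup_\ell J_\ell = I}} m_{J_1}\cdots m_{J_i}.
\]
The right side manifestly involves only $m_J$ with $J\subseteq I$, and its coefficients are determined purely by the combinatorics of ordered set partitions of $I$, so the formula is literally the same in $\smring{n}$ and in $\smring{n'}$ under the natural inclusion $\smring{n}\subseteq \smring{n'}$.

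The argument for $\kappa_n^{-\#}(m_I)$ is strictly parallel: expand $(f_k)^i$ in $\skring{n}[x]/J$, use the same squarefree-product rule to collapse $x^{J_1}\cdots x^{J_i}$ to $x^{J_1 \cup\cdots\cup J_i}$ when the $J_\ell$ are disjoint, and read off
\[
\kappa_n^{-\#}(m_I) \;=\; \sum_{i=1}^{|I|} \frac{1}{i!} \sum_{\substack{(J_1,\ldots,J_i)\\ J_\ell\neq\emptyset,\ \sqcup_\ell J_\ell = I}} k_{J_1}\cdots k_{J_i},
\]
which again is independent of the ambient $n$. I expect no real obstacle here: the only thing to watch is the truncation of the logarithm and exponential series, but this is already guaranteed by working modulo $J$ (any monomial in $(f_m-1)^i$ or $(f_k)^i$ with $i > n$ involves some repeated $x_j$ and hence vanishes). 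The entire proof is then a one-paragraph bookkeeping exercise comparing coefficients, and the statement follows.
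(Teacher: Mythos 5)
Your proposal is correct, but it proves the statement by a self-contained computation where the paper proves it by citation: the paper simply observes that in Sturmfels--Zwiernik the moment-to-cumulant and cumulant-to-moment formulas are re-expressed via M\"obius functions on the partition lattice, and those closed forms make no reference to $n$. Your derivation obtains essentially the same closed forms directly from the generating-function definition: expanding $(f_m-1)^i$ and $(f_k)^i$ modulo the ideal of squares, so that only products $x^{J_1}\cdots x^{J_i}$ with pairwise disjoint $J_\ell$ survive, and reading off the coefficient of $x^I$ as a sum over ordered partitions of $I$ into $i$ nonempty blocks. After merging the $i!$ orderings of each unordered partition $\pi$, your coefficient $(-1)^{i+1}/i$ becomes the M\"obius coefficient $(-1)^{|\pi|-1}(|\pi|-1)!$, so your formula is exactly the one the paper invokes. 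You also correctly dispose of the one subtlety the citation glosses over, namely that the series in the definition is truncated at degree $n$ (and at $n'$ in the larger ring): since an ordered partition of $I$ has at most $|I|$ blocks, all terms with $i>|I|$ contribute nothing to the coefficient of $x^I$, so the two truncations agree on every $m_J$, $k_J$ with $J\subseteq I\subseteq[n]$. What your route buys is independence from the external reference and an explicit formula; what the paper's route buys is brevity. (A cosmetic point only: your formula for $\kappa_n^{-\#}(m_I)$ is stated for nonempty $I$, which is all that is needed since $m_\emptyset=1$ and $k_\emptyset=0$ in the relevant quotient rings.)
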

\begin{proof}
In \citep{SZ11}, these formulae are re-expressed using M\"obius functions, which do not depend on the generating function description above, and in particular do not depend on $n$.
\end{proof}
\subsection{Deriving $\bhmi{4}$ in Macaulay2}\label{sec-der}
This section describes the proof of \ar{res-bhmi4} using Macaulay2.  These computations were carried out on  a Toshiba Satellite P500 laptop running Ubuntu 10.04, with an Intel Core i7 Q740 .73 GHz CPU and 8gb of RAM.  In light of \ar{res-mhom}, we will aim to compute $\kerphiiota$, which can be understood geometrically as the (non-homogeneous) ideal of the standard affine patch of $\bhmv{4}$ where $m_\es=\sum_{|v|=4}p_v=1$.  To reduce the number of variables, as in \ar{res-mhom} we continue to make the identification 
\begin{equation*}
\smring{4} = \CC[m_I | \es \neq I \sbs [4]] \sbs \mring{4}
\end{equation*}

We begin by providing Macaulay2 with the map $\phi_4^\#: \smring{4} \to \tring$
in moment coordinates (\ar{sec-mom}), because probability coordinates result in longer, higher degree expressions.  This can be done by composing the expression of $\phi_n\us(p_v)$ in \ar{def-bhmmap} with the expression of $m_v=m_I$ in \eqref{eqn-mom}, or alternatively using the Baum formula for moments (\ar{res-baum}), which involves many fewer arithmetic operations.

Macaulay2 runs out of memory (8gb) trying to compute $ \kerphi$, and as expected, this memory runs out even sooner in probability coordinates, so we use cumulant coordinates instead (\ar{sec-cum}).  We input 
\begin{equation*}
\kappa_4^\#: \skring{4} \to \smring{4}
\end{equation*}
using coefficient extraction from \eqref{eqn-cum}, and compute the composition 
$\phi_4\us \circ \kappa_4\us$.  Then, it is possible to compute
\begin{equation*}
 \kerphik:=\ker(\phi_4^\# \circ \kappa_4^\#)
\end{equation*}
which takes {\bf around 1.5 hours}. 
Alternatively, we can compute $ \kerphik$ using the birational parameterization $\psi_4$ of \ar{sec-bir} in place of $\phi_4$, which takes {\bf less than 1 second} and yields $100$ generators for $\kerphik$.  

Subsequent computations run out of memory with this set of $100$ generators, so we must take some steps to simplify it.  Macaulay2's {\bf trim} command reduces the number of generators of $ \kerphik$ to $46$ in {\bf under 1 second}.  
We then order these $46$ generators lexicographically, first by degree and then by number of terms, and eliminate redundant generators in reverse order, which takes {\bf 19 seconds}.  
The result is an inclusion-minimal, non-homogeneous generating set for $ \kerphik$ with 35 generators: 24 quadrics and 11 cubics.

Now we compute ${\bf I}_{m,4}:=\kappa\us(I_{k,4})
=\kappa\us(\ker(\phi_4\us\circ\kappa_4\us))
=\ker(\phi_4\us),
$
i.e., we push forward the 35 generators for $\kerphik$ under the non-linear ring isomorphism $\kappa_4^\#$ to obtain 35 generators for $\kerphim=\kerphi$: 2 quadrics, 7 cubics, 16 quartics, 5 quintics, and 5 sextics.  In {\bf under 1 second}, Macaulay2's {\bf trim} command computes a new set of 39 generators for $\kerphim$ with lower degrees: 21 quadrics,  14 cubics, and 4 quartics, which turns out to {\bf save around 1 hour} of computing time in what follows.  These generators have many terms each, and eliminating redundant generators as in the previous paragraph turns out to be too slow to be worth it here, taking more than 2 hours, so we omit this step.

Finally, we apply \ar{res-mhom} to compute $\bhmi{4}$ as the homogenization of $\kerphim$ with respect to $m_\es$.  In Macaulay2, this is achieved by homogenizing the 39 generators for $\kerphim$ with respect to $m_\es$ and then saturating the ideal they generate with respect to $m_\es$.  This saturation operation takes about {\bf 29 minutes}, and yields a minimal generating set of 50 polynomials: 21 quadrics and 29 cubics. 
Since probabilities are linear in moments, their degrees are the same in probability coordinates.   Moreover, since these are homogeneous generators for a homogeneous ideal, they are minimal in a very strong sense:

\begin{cor}
Any inclusion-minimal homogeneous generating set for $\bhmi{4}$ in probability or moment coordinates must contain exactly 21 quadrics and 29 cubics.
\end{cor}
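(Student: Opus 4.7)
The plan is to reduce the statement to the graded Nakayama lemma, which says that for a homogeneous ideal $I$ in a standard-graded polynomial ring over a field, the number of minimal homogeneous generators in each degree is an invariant of $I$, independent of the choice of generating set.

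First I would note that the change of coordinates \eqref{eqn-mom} between $\{p_v\}$ and $\{m_I\}$ is $\CC$-linear and invertible, and preserves the standard grading on $\pring{4} = \mring{4}$. A set of homogeneous polynomials therefore has the same count of elements in each degree whether written in probabilities or in moments, so it suffices to prove the claim in either coordinate system, and in either case $\bhmi{4}$ sits as a homogeneous ideal in a standard-graded polynomial ring.

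Next I would invoke graded Nakayama: with $\mathfrak{m}$ the irrelevant ideal, a homogeneous subset $G \sbs \bhmi{4}$ is an inclusion-minimal homogeneous generating set precisely when its image in the graded $\CC$-vector space $\bhmi{4}/\mathfrak{m}\bhmi{4}$ is a basis. The number of degree-$d$ elements in $G$ therefore equals $\dim_\CC(\bhmi{4}/\mathfrak{m}\bhmi{4})_d$, which depends only on $\bhmi{4}$. Since \autoref{res-bhmi4} exhibits one such inclusion-minimal homogeneous generating set consisting of $21$ quadrics, $29$ cubics, and nothing else, these dimensions are pinned to $21$ in degree $2$, $29$ in degree $3$, and $0$ in every other degree, so every other inclusion-minimal homogeneous generating set must have exactly this degree profile. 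The real work is already done inside \autoref{res-bhmi4}; the corollary itself is a purely formal consequence of graded Nakayama, with no real obstacle beyond quoting it correctly.
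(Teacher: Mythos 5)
Your proof is correct and matches the paper's own reasoning: the paper also relies on the linearity of the moment/probability change of coordinates (so degrees are preserved) together with the standard graded fact that the degrees of a minimal homogeneous generating set of a homogeneous ideal are uniquely determined, which you spell out via graded Nakayama. You simply make explicit what the paper leaves as the remark that homogeneous generators of a homogeneous ideal are ``minimal in a very strong sense.''
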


We still do not know a generating set for $\bhmi{5}$.  Macaulay2 runs out of memory (8gb) attempting to compute ${{\bf I}_{k,5}}$, even using the birational parametrization of \ar{sec-bir}.  The author has also attempted this computation using the {\em tree cumulants} of \citet*{SZ10} in place of cumulants, but again Macaulay2 runs out of memory trying to compute the first kernel.  Presumably the subsequent saturation step would be even more computationally difficult.  

\section{Birational parametrization of BHMMs }\label{sec-bir}
\begin{thm}[Birational Parameter Theorem]\label{res-bir}
There is a generically two-to-one, dominant morphism $\taff \to \CC^5$ such that, for each $n\geq 3$, the binary hidden Markov map $\phi_n$ factors uniquely as follows, and each $\psi_n: \CC^5 \to \bhmv{n}$ has a birational inverse map $\rho_n$:
\begin{center}\vspace{-1ex}
\begin{tikzpicture}[node distance=2cm, auto, semithick, >=stealth']
  \node (C) {$\CC^5$};
  \node (P) [right of=C, node distance=2.2cm] {$\spaff{n}$};
  \draw[->] (C) to node {$\psi_n$}	(P);
  \node (Ct) [left of=C] {$\taff$};
  \draw[->, bend right] (Ct) to node [swap] {$\phi_n$}	(P);
  \draw[->] (Ct) to node {}		(C); 
  \node (blank) [right of=P, node distance=3cm] {};
  \node (C) [below of=blank, node distance=0.4cm] {$\CC^5$};
  \node (P) [right of=C, node distance=2.2cm] {$\bhmv{n}$};
  \draw[->, bend left] (C) to node {$\psi_n$}	(P);
  \draw[->, bend left, dashed] (P) to node {$\rho_n$}	(C);
\end{tikzpicture}
\end{center}\vspace{-1ex}
In particular, $\bhmv{n}$ is always a rational projective variety of dimension $5$, \ie, birationally equivalent to $\PP^5$.
\end{thm}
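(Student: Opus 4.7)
The plan is to exploit the hidden label swap involution $\sigma$ on $\taff$ given by renumbering the two hidden states, acting as $\pi\mapsto \pi J$, $T\mapsto JTJ$, $E\mapsto JE$, where $J$ is the $2\times 2$ swap matrix. Formula~\eqref{eqn-bhmmap} is manifestly $\sigma$-invariant because it sums over all hidden histories, so $\phi_n$ factors set-theoretically through the quotient of $\taff$ by $\sigma$. My approach is to write down five algebraically independent polynomial $\sigma$-invariants on $\taff$, and to take $\fq$ to be the map they define. Dominance and generic two-to-oneness then follow automatically: $\taff$ has dimension $5$, and the fixed locus of $\sigma$ (the closed subset where the two hidden-state rows collapse) has positive codimension, so $\fq$ is dominant and the generic fiber has cardinality $2$. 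The induced $\psi_n$ with $\phi_n=\psi_n\circ\fq$ is then unique by dominance of $\fq$.

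The core of the proof is the explicit construction of the birational inverse $\rho_n$. I would give five rational functions in the moment coordinates $m_I$ that recover the five invariants defining $\fq$, of the flavor of the sample formula $u=(m_1m_3-m_2^2+m_{23}-m_{12})/(2(m_3-m_2))$ advertised in the introduction. A natural engine for this is \autoref{res-bhmv3}: the $3\times 3$ minors of $A'_{3,3}$ vanish on $\bhmv{n}$ for every $n\geq 3$ by the marginalization identity~\eqref{eqn-subs}, yielding polynomial relations among low-order moments that can be solved in a triangular order for the five intermediate parameters. Verifying $\rho_n\circ\psi_n=\mathrm{id}_{\CC^5}$ is then a direct polynomial identity check. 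Since the recipe is uniform in $n\geq 3$, a single $\rho_n$ works for all lengths. With $\psi_n$ and $\rho_n$ in hand, $\psi_n$ is birational onto its image, which is $\bhmv{n}$ by construction, so $\bhmv{n}$ is birational to $\CC^5 \subset \PP^5$, giving both rationality and $\dim\bhmv{n}=5$.

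The main obstacle is discovering the specific five-coordinate system and its inversion formulas. I would follow the outline of \autoref{sec-trace}: the Sibirskii generators of the trace algebra of triples of $2\times 2$ matrices furnish a canonical supply of $\sigma$-invariant polynomial functions on $\taff$, and a Baum-type product formula expresses every $\phi_n^\#(m_I)$ as a polynomial in these traces. This drastically constrains the search, reducing it to identifying a five-element subset of the trace generators whose values can be recovered as moment ratios. The paper's mention of a lexicographic Gr\"obner basis producing a triangular recovery order suggests the inversion is not found by inspired guessing but by systematic computer algebra, which is where the real work lies.
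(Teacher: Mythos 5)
Your route is the paper's route in outline — factor $\phi_n$ through a five\hyphen{}dimensional space of label\hyphen{}swap invariants, invert explicitly on the 3\hyphen{}node moments, and push to all $n$ by marginalization — but your first paragraph has a genuine gap at exactly the point where the real work lies. The $\sw$\hyphen{}invariance of $\phi_n$ only gives a factorization through the full invariant ring $\tring^{\sw}=\CC[b,u,a_0^2,c_0^2,v_0^2,a_0c_0,a_0v_0,c_0v_0]$, whose spectrum is five\hyphen{}dimensional but is not $\CC^5$ and needs eight generators; it does not give a factorization through the map defined by an arbitrary choice of five algebraically independent invariants, and generic two\hyphen{}to\hyphen{}oneness does not ``follow automatically'' from the fixed locus of $\sw$ having positive codimension. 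Concretely, $b,u,a_0^2,c_0^2,v_0^2$ are five algebraically independent $\sw$\hyphen{}invariants, yet the map they define is generically $8{:}1$ (the extension $\CC(a_0,b,c_0,u,v_0)\supset\CC(b,u,a_0^2,c_0^2,v_0^2)$ has Galois group $(\ZZ/2)^3$, the three independent sign changes), and $\phi_n$ does not factor through it: already $\phi_3^\#(m_1)=a_0v_0+u$ is not a rational function of those five invariants, since $a_0v_0$ is moved by the sign change on $a_0$ alone. So both the existence of the factorization and the degree\hyphen{}two statement depend on exhibiting a \emph{specific} five\hyphen{}generated subring that contains the image of $\phi_n^\#$ and whose fraction field has index exactly two in the parameter field. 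That is precisely the content of the paper's Parameter Subring Lemma (\autoref{res-sub}): $\phi_n^\#$ lands in $\nring=\CC[a_0v_0,\,b,\,c_0v_0,\,u,\,v_0^2]$, proved by factoring the Baum formula for moments through the trace algebra $C_{2,3}$ and checking that the images of Sibirskii's ten generators lie in this subring; two\hyphen{}to\hyphen{}oneness then follows because adjoining $v_0=\sqrt{v}$ recovers $a_0=a/v_0$ and $c_0=c/v_0$. Your third paragraph gestures at this computation, but as written your argument would certify invariant systems for which the theorem's conclusions are false, so this lemma must be stated and proved rather than treated as an automatic consequence of the involution.

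The remainder of your plan is sound and essentially matches the paper: the triangular recovery of $(b,u,a,c,v)$ from 3\hyphen{}node moments is \autoref{res-rec3} and \autoref{res-rho3}, and the uniform\hyphen{}in\hyphen{}$n$ inverse is $\rho_n=\rho_3\circ\mu^n_3$. One refinement: checking only $\rho_n\circ\psi_n=\mathrm{id}$ is enough, but not by itself ``directly'' — you also need $\psi_n$ to be dominant onto $\bhmv{n}$, which again uses the factorization $\phi_n=\psi_n\circ\fq$ and dominance of $\fq$ (\autoref{res-factor}); then the one\hyphen{}sided identity makes $\psi_n^\#$ a surjective, hence bijective, map of function fields, yielding the other composite. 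The paper instead verifies both composites on dense open sets using Chevalley's theorem; either finish works.
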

Using the reduction of \ar{sec-hmms}, the same is true if we allow $k > 2$ visible states in the model and replace $5$ by $3+k$.  This theorem will be proven in \ar{sec-birproof} using trace algebras and the Baum formula for moments.  In the course of this section and \ar{sec-trace} we will exhibit formulae for $\psi_n$ and their inverses $\rho_n$.  The inverse map $\rho_3$ has a number of practical uses, to be explored in \ar{sec-app}.

Our first step toward \ar{res-bir} is to re-parametrize $\taff$.

\subsection{A linear reparametrization of $\taff$}\label{sec-linpar}

Since the hidden variables $H_t$ are never observed, there is no change in 
the final expression of $p_v$ in \ar{def-bhmmap} if we swap the  labels $\labs$ of all the $H_t$ 
simultaneously.  This swapping is equivalent to an action of the elementary 
permutation matrix $\sigma=\left(\smat{0 & 1  \\ 1 & 0}\right)$:
\begin{align}
\notag \sw: \taff &\to \taff\\
\label{eqn-sw} \theta=(\pi,T,E) &\mapsto (\pi \sigma, \, \sigma\- T \sigma, \, \sigma\- E)
\end{align}
(In our case $\sigma\- = \sigma$, but the form above generalizes to permutations of larger hidden alphabets.)  Hence we have that 
$\pr(v \, | \, \pi, T, E) = \pr(v \, | \ \sw(\pi , T, E))$, i.e. $\phi_n = \phi_n \circ \sw$.

We will make essential use of a linear parametrization of $\taff$ in which $\sw$ has a simple form.  Our new parameters will be  
 $\eta_0:=(a_0,b,c_0,u,v_0),$ with subscript $0$'s to be explained shortly.  Although we have already used the letter $v$ at times to represent visible binary strings, we hope that the context will be clear enough to avoid confusion between these usages.  We let
\begin{equation}\label{eqn-pite}
\begin{split}
\pi \; = \; \frac{1}{2}\bmat{1 - a_0, & 1 + a_0}\hspace{20ex}\\
T \; = \; \frac{1}{2} \bmat{
1+b-c_0, & 1-b+c_0 \\ 
1-b-c_0, & 1+b+c_0}
\qquad E \; = \; \bmat{
1-u+v_0, & u-v_0  \\ 
1-u-v_0, & u+v_0
} 
\end{split}
\end{equation}
(The rightmost column of $E$ is made intentionally homogeneous in the new parameters.)  We can linearly solve for $\eta_0$ in terms of $\theta$ by $a_0=\pi_1-\pi_0$ etc., so in fact $(a_0,b,c_0,u,v_0)$ generate the parameter ring $\CC[\theta]$.  In these coordinates, $\sw$ acts by
\begin{align*}
a_0 &\mt -a_0, & b \mt b, && c_0 &\mt -c_0, & u \mt u, && v_0 &\mt-v_0
\end{align*}
In other words, swapping the signs of the subscripted variables $a_0,c_0,v_0$ has the same effect as acting on the matrices $\pi,T,E$ by $\sigma$ as in \eqref{eqn-sw}, \ie, relabeling the hidden alphabet.

\subsection{Introducing the birational parameters}\label{}
Since $\phi_n \circ \sw =  \phi_n$, by classical invariant theory the ring map $\phi_n^\#: \spring{n} \to \tring$ must land in the subring of invariants $\tring^{\sw} = \CC[b,u,a_0^2,c_0^2,v_0^2,a_0c_0,a_0v_0,c_0v_0].$
However, $\phi_n^\#$ in fact factors through a smaller subring, conveniently generated by 5 elements:
\begin{lem}[Parameter Subring Lemma]\label{res-sub}
For all $n\geq 3$, the ring map $\phi_n^\#$ lands in the subring 
$$ \nring :=\CC[a,b,c,u,v] $$
of $\tring$, where $a=a_0v_0$, $c=c_0v_0$, $v=v_0^2$.
\end{lem}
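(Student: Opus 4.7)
The plan is to track how the degrees of the parameters $a_0, c_0, v_0$ evolve as one unravels \eqref{eqn-bhmmap} in transfer-matrix form
$$\phi_n^\#(p_v) \;=\; \pi \, D^{v_1} T D^{v_2} T \cdots T D^{v_n} \mathbf{1},$$
where $D^j = \operatorname{diag}(E_{0j}, E_{1j})$ and $\mathbf{1} = (1,1)^\top$. The key subring-membership criterion is elementary: a monomial $a_0^\alpha b^\beta c_0^\gamma u^\mu v_0^\nu$ of $\tring$ lies in $\nring = \CC[a_0 v_0, b, c_0 v_0, u, v_0^2]$ if and only if $\nu \geq \alpha + \gamma$ and $\nu \equiv \alpha + \gamma \pmod 2$. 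The congruence is already granted by the $\sw$-invariance of $\phi_n^\#$ observed before the lemma, so the real content is the inequality.

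Next I would expand intermediate row vectors in the basis $\{\mathbf{1}^\top, e^\top\}$ with $e = (-1,1)^\top$. From \eqref{eqn-pite} one has $\pi = \tfrac12\mathbf{1}^\top + \tfrac{a_0}{2} e^\top$, and direct computation gives the right-multiplication rules
\begin{align*}
\mathbf{1}^\top T &= \mathbf{1}^\top + c_0\, e^\top, & e^\top T &= b\, e^\top,\\
\mathbf{1}^\top D^j &= \lambda_j\, \mathbf{1}^\top + \mu_j\, e^\top, & e^\top D^j &= \mu_j\, \mathbf{1}^\top + \lambda_j\, e^\top,
\end{align*}
where $(\lambda_0,\mu_0) = (1-u, -v_0)$ and $(\lambda_1,\mu_1) = (u, v_0)$. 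Consequently, if the intermediate state has the form $\alpha\mathbf{1}^\top + \beta e^\top$, then right-multiplication by $T$ sends $(\alpha,\beta) \mapsto (\alpha,\, \alpha c_0 + b\beta)$ and by $D^j$ sends $(\alpha,\beta) \mapsto (\lambda_j \alpha + \mu_j \beta,\, \mu_j \alpha + \lambda_j \beta)$; the terminal contraction by $\mathbf{1}$ then gives $\phi_n^\#(p_v) = 2\alpha_{\mathrm{final}}$.

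Finally I would grade $\tring$ by assigning weight $+1$ to $a_0$ and $c_0$, weight $-1$ to $v_0$, and weight $0$ to $b$ and $u$; for a polynomial $f$ let $W(f)$ denote the maximum weight of any monomial in $f$. By induction on the number of completed steps, the invariant $W(\alpha) \leq 0$ and $W(\beta) \leq 1$ holds throughout. Initially, $W(1/2) = -\infty$ and $W(a_0/2) = 1$. A $T$-step gives the new $\beta$ weight at most $\max(W(\alpha) + 1, W(\beta)) \leq 1$. A $D^j$-step, noting $\lambda_j$ has weight $0$ and $\mu_j$ has weight $-1$, gives the new $\alpha$ weight at most $\max(W(\alpha), W(\beta) - 1) \leq 0$ and the new $\beta$ weight at most $\max(W(\alpha) - 1, W(\beta)) \leq 1$. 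Therefore $W(\phi_n^\#(p_v)) = W(2\alpha_{\mathrm{final}}) \leq 0$, which combined with the parity statement places every monomial in $\nring$.

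The main obstacle is identifying the correct invariant to induct on. A priori, $\sw$-invariance alone permits monomials such as $a_0^2$, which are invariant but not in $\nring$, so some genuinely finer structure is required; once the basis $\{\mathbf{1}^\top, e^\top\}$ and the grading weights above are chosen correctly, the remaining induction is mechanical.
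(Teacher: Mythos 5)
Your proof is correct, but it takes a genuinely different route from the paper. The paper proves this lemma only after the trace-variety machinery of Section 5: it factors $\phi_\infty^\#$ through the trace algebra $\tralg$ via the Baum formula, invokes Sibirskii's 1968 theorem that $C_{2,3}$ is generated by ten traces $\tr(X_i)$, $\tr(X_iX_j)$, $\tr(X_0X_1X_2)$, and then checks by direct evaluation that the images of those ten generators (e.g.\ $\tr(M_1M_2)=a+u$, $\tr(M_1^2)=b^2u^2+2bcu+c^2+2cu+u^2+2bv$) lie in $\nring$. You instead argue elementarily: you reduce membership in the monomial subalgebra $\nring\subset\tring$ to the per-monomial conditions $\nu\ge\alpha+\gamma$ and $\nu\equiv\alpha+\gamma\pmod 2$, get the parity from the $\sw$-invariance already established in the paper, and get the inequality by a transfer-matrix induction in the basis $\{\one^\top,e^\top\}$ with the weight grading $w(a_0)=w(c_0)=1$, $w(v_0)=-1$, $w(b)=w(u)=0$; I checked your multiplication rules against \eqref{eqn-pite} and they are right, and the invariant $W(\alpha)\le 0$, $W(\beta)\le 1$ is preserved by both the $T$-step and the $D^j$-step (only a cosmetic slip: $W(1/2)=0$, not $-\infty$, which still satisfies the invariant). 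What each approach buys: yours is self-contained, avoids classical invariant theory entirely, and handles all $n$ uniformly with no appeal to $\phi_\infty$ or finite generation; the paper's proof is shorter given its machinery, reduces the verification to ten explicit traces, and—more importantly—the trace-variety factorization it rests on is reused to prove the Birational Parameter Theorem and the birationality of all $\bhmv{n}$, so the paper gets the lemma essentially for free along the way to results your argument does not address.
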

The proof of this key lemma will be given in \autoref{sec-sub} after introducing trace algebras.  To interpret its geometric consequences,  write $\fq\us$ for the subring inclusion
$$\fq\us : \nring \hra \tring \phantom{\mt}$$
$$a\mt a_0v_0, \quad b \mt b, \quad c \mt c_0v_0, \quad u \mt u, \quad v \mt v_0^2,$$
write $\psi_n\us:\spring{n} \to \nring$ for the factorization of $\phi_n\us$ through $\fq\us$, and write 
$\naff:= \spec \CC[\eta]$, so $\naff\simeq \CC^5$.  The result:
\begin{cor}\label{res-dom}The following diagram of dominant maps commutes
\begin{center}
\begin{tikzpicture}[node distance=2cm, auto, semithick, >=stealth']
  \node (Cn) {$\naff$};
  \node (Vn) [right of=Cn, node distance=2.5cm] {$\bhmv{n}$};
  \draw[->] (Cn) to node {$\psi_n$}	(Vn);
  \node (Ct) [left of=Cn] {$\taff$};
  \draw[->, bend right] (Ct) to node [swap] {$\phi_n$}	(Vn);
  \draw[->] (Ct) to node {${\mathfrak q}$}		(Cn); 
\end{tikzpicture}
\end{center}
and $\fq$ is generically two-to-one.
\end{cor}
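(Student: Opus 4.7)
The plan is to verify the three assertions of the corollary in order: commutativity of the triangle, dominance of each arrow, and the generic degree of $\fq$. All the serious content is packaged into \ar{res-sub}, so once that lemma is granted, the proof of the corollary amounts to careful bookkeeping, and I expect no real obstacle.

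For commutativity, \ar{res-sub} supplies a factorization $\phi_n^\# = \fq^\# \circ \psi_n^\#$ of ring maps; applying $\spec$, which reverses composition, yields $\phi_n = \psi_n \circ \fq$, which is the triangle in question. For dominance, I use that a morphism between irreducible affine varieties is dominant iff the corresponding ring map is injective. The map $\fq^\#$ is an explicit subring inclusion, so $\fq$ is dominant. The map $\phi_n$ is dominant onto $\bhmv{n}$ by the very definition of $\bhmv{n}$ as the Zariski closure of $\phi_n(\taff)$ (combined with \ar{res-hom}). Finally, since $\phi_n = \psi_n \circ \fq$ is dominant and $\fq$ has dense image, $\psi_n$ must also have dense image onto $\bhmv{n}$, so it is dominant as well.

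The only remaining step is the generic two-to-one claim, which I verify by direct inspection of fibers. For a point $(a,b,c,u,v) \in \naff$, the fiber $\fq^{-1}(a,b,c,u,v)$ consists of tuples $(a_0, b', c_0, u', v_0) \in \taff$ solving
$$a_0 v_0 = a,\quad b' = b,\quad c_0 v_0 = c,\quad u' = u,\quad v_0^2 = v.$$
On the open set where $v \neq 0$, the equation $v_0^2 = v$ has exactly two solutions $v_0 = \pm\sqrt{v}$, each of which then uniquely determines $a_0 = a/v_0$ and $c_0 = c/v_0$. So the generic fiber has cardinality exactly $2$, and its two points are interchanged by $(a_0, c_0, v_0) \mapsto (-a_0, -c_0, -v_0)$, which is precisely the hidden-label swap $\sw$ of \eqref{eqn-sw}. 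Equivalently, one observes $\ffield(\tring) = \ffield(\fq^\#(\nring))[v_0]$ with $v_0$ a root of the irreducible polynomial $v_0^2 - v$ of degree $2$ over $\ffield(\fq^\#(\nring))$, so the function field extension induced by $\fq^\#$ has degree $2$.
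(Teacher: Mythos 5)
Your proposal is correct and follows essentially the same route as the paper: the commuting triangle and dominance are read off from the Parameter Subring Lemma via the subring inclusion $\mathfrak{q}^{\#}$ and the definition of the model variety as the closure of the image of $\phi_n$, exactly as the paper intends (compare its one-line proof that $\overline{\operatorname{im}}(\psi_n)=\overline{\operatorname{im}}(\psi_n\circ\mathfrak{q})$). Your explicit fiber computation, with the two preimages exchanged by the hidden-label swap via $v_0=\pm\sqrt{v}$, is precisely the quadratic-extension argument the paper invokes (implicitly here, and explicitly in its identifiability discussion), so no further comparison is needed.
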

This corollary in particular implies the first part of the \nr{res-bir}, by taking $\fq : \taff \to \naff\simeq \CC^5$ as the generically $2:1$ map.
\begin{rem}The map $\fq$ is only dominant, and not surjective; for example, it misses the point $(1,0,0,0,0)$.  
\end{rem}
\begin{cor}\label{res-factor}
For all $n\geq 3$, $\bhmv{n}=\ol\im(\ol\iota_n\psi_n)$.
\end{cor}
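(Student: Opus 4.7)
The statement is essentially a formal consequence of the dominance of $\fq$ established in \ar{res-dom}, together with the definition of $\bhmv{n}$. The plan is to unwind the definition of $\bhmv{n}$, apply the factorization $\phi_n = \psi_n \circ \fq$, and then use a general fact about continuous maps: the image of a Zariski-dense subset has the same closure as the image of the whole space.

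Concretely, I would proceed as follows. First, recall from \ar{def-bhmmap} that $\bhmv{n}$ is by definition the Zariski (equivalently classical) closure of $\ol\iota_n \phi_n(\taff)$ in $\ppro{n}$. Second, substitute the factorization $\phi_n = \psi_n \circ \fq$ from \ar{res-dom} to rewrite this as
\[
\bhmv{n} \;=\; \overline{\,\ol\iota_n\psi_n(\fq(\taff))\,}.
\]
Third, invoke the dominance of $\fq$ (also from \ar{res-dom}), which means precisely that $\fq(\taff)$ is Zariski dense in $\naff$. Fourth, apply the following general lemma in the Zariski topology: if $f : X \to Y$ is a continuous map of topological spaces and $A \subseteq X$ is dense, then $\overline{f(A)} = \overline{f(X)}$. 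The proof is a one-liner: $A \subseteq f^{-1}(\overline{f(A)})$, and the right side is closed, so $X = \overline{A} \subseteq f^{-1}(\overline{f(A)})$, giving $f(X) \subseteq \overline{f(A)}$; the reverse containment is automatic.

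Applying this lemma with $X = \naff$, $A = \fq(\taff)$, and $f = \ol\iota_n \psi_n$ (continuous as a morphism of varieties), one obtains
\[
\overline{\,\ol\iota_n\psi_n(\fq(\taff))\,} \;=\; \overline{\,\ol\iota_n\psi_n(\naff)\,} \;=\; \overline{\,\im(\ol\iota_n\psi_n)\,},
\]
which combines with the previous displayed equation to give the result. There is essentially no obstacle here: the content is entirely packaged in \ar{res-dom}, and this corollary is a clean bookkeeping consequence of it. The only small subtlety worth flagging is that the closure operation is taken in $\ppro{n}$ rather than $\spaff{n}$, but since $\ol\iota_n$ is an open immersion (onto the complement of the hyperplane $\{m_\emptyset = 0\}$), the two closures agree after applying $\ol\iota_n$, so no additional argument is needed.
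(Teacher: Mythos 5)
Your proposal is correct and follows essentially the same route as the paper: the paper's proof is the one-line chain $\ol\im(\ol\iota_n\psi_n)=\ol\im(\ol\iota_n\psi_n\fq)=\ol\im(\ol\iota_n\phi_n)=\bhmv{n}$, where the first equality is exactly your density lemma applied to the dominant map $\fq$ from \autoref{res-dom}. You have merely spelled out the topological lemma that the paper leaves implicit, so there is nothing to add.
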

\begin{proof}
Since $\fq$ is dominant,
$\ol\im(\ol\iota_n\psi_n)=\ol\im(\ol\iota_n\psi_n \fq)
= \ol\im(\ol\iota_n\phi_n)
 =: \bhmv{n}$.
\end{proof}

The unique factorization map $\psi_n^\#$ can be computed directly in Macaulay2 for small $n$. The expressions in moment coordinates are simpler than in probabilities, so we present these in the following proposition.
\begin{prop}\label{res-psi3}
The map $\psi_3^\#$ is given in moment coordinates by 
\begin{align*}
m_\es=m_{000}\mapsto&\; 1\\
m_{1}=m_{100}\mapsto&\; a+u\\
m_{2}=m_{010}\mapsto&\; ab+c+u\\
m_{3}=m_{001}\mapsto&\; ab^2+bc+c+u\\
m_{12}=m_{110}\mapsto&\; abu+ac+au+cu+u^2+bv\\
m_{13}=m_{101}\mapsto&\; ab^2u+abc+bcu+b^2v+ac+au+cu+u^2\\
m_{23}=m_{011}\mapsto&\; ab^2u+abc+abu+bcu+c^2+2cu+u^2+bv\\	
\notag m_{123}=m_{111}\mapsto&\; ab^2u^2+2abcu+abu^2+bcu^2+b^2uv+ac^2+2acu\\
&+c^2u+au^2+2cu^2+u^3+abv+bcv+2buv
\end{align*}
\end{prop}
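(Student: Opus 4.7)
The plan is to compute each $\psi_3^\#(m_I)$ directly from the BHM formula \eqref{eqn-bhmmap} and the linear reparametrization \eqref{eqn-pite}, using the fact that marginal moments have a clean matrix-product form. Writing $\widetilde{E}$ for the diagonal matrix $\operatorname{diag}(E_{0,1}, E_{1,1}) = \operatorname{diag}(u-v_0, \, u+v_0)$, one obtains (this is the content of the Baum formula \ar{res-baum})
$$\phi_3^\#(m_I) \;=\; \pi\, X_1\, T\, X_2\, T\, X_3\, \mathbbm{1},$$
where $X_t = \widetilde{E}$ if $t\in I$ and $X_t = \operatorname{Id}$ otherwise, and $\mathbbm{1}=(1,1)^\top$. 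Because both $\pi$ and $T$ are row-stochastic, the factors $X_t=\operatorname{Id}$ collapse: e.g.\ $m_\emptyset = \pi T T \mathbbm{1} = 1$ and $m_1 = \pi\widetilde{E}\mathbbm{1}$.

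Next I would record the three auxiliary quantities that recur throughout: $\pi\widetilde{E} = \tfrac12\bigl((1-a_0)(u-v_0),\,(1+a_0)(u+v_0)\bigr)$, the column $\widetilde{E}\mathbbm{1} = (u-v_0,\,u+v_0)^\top$, and the useful identity
$$\pi T \;=\; \tfrac12\bigl(1 - c_0 - a_0 b,\; 1 + c_0 + a_0 b\bigr),$$
which follows by direct expansion from \eqref{eqn-pite} and reveals the combinations $a_0 b$ and $c_0$ already present in $m_2 = \pi T\widetilde{E}\mathbbm{1} = u + c_0 v_0 + a_0 b v_0 = u + c + ab$. The other singletons $m_1, m_3$ and pairs $m_{12}, m_{13}, m_{23}$ are obtained analogously by computing one more matrix-vector product; in each case $T\widetilde{E}\mathbbm{1}$ and $T\widetilde{E}T\mathbbm{1}$ can be reused, so only a handful of independent multiplications are actually required. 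The triple $m_{123} = \pi\widetilde{E}T\widetilde{E}T\widetilde{E}\mathbbm{1}$ is the longest one to expand but is entirely mechanical.

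A crucial consistency check comes from the \nr{res-sub}: the output must lie in $\CC[a,b,c,u,v]$ where $a=a_0 v_0$, $c=c_0 v_0$, $v=v_0^2$. In particular, every monomial in $a_0, c_0, v_0$ appearing after expansion must carry an \emph{even} total degree in the subscripted variables and moreover must be expressible via $a,c,v$ alone (e.g.\ $a_0^2$ cannot appear as such, only through the combination $a_0 v_0 \cdot a_0 v_0 / v_0^2$, which would be non-polynomial, so $a_0^2$ and $c_0^2$ must cancel). This gives a strong running check on each expansion and is what forces, for instance, the coefficient of $a^2$ in $m_{13}$ to vanish.

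The main obstacle is purely bookkeeping: expanding $\pi\widetilde{E}T\widetilde{E}T\widetilde{E}\mathbbm{1}$ produces dozens of terms in $a_0,b,c_0,u,v_0$ before cancellation, and tracking them by hand is error-prone. I would therefore either carry out the expansion symbolically (e.g.\ in Macaulay2, as remarked just before the proposition) by composing the map \eqref{eqn-bhmmap} with the substitution \eqref{eqn-pite} and the definition $m_I = \sum_{w\geq \mathbbm{1}_I} p_w$, and then substitute $a_0 = a/v_0$, $c_0 = c/v_0$, $v_0^2 = v$ to verify the listed formulas; or verify the eight identities one at a time by hand, using the recursive structure to reuse partial products. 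Either route reduces the proof to a finite, mechanical verification whose correctness is already guaranteed in principle by \ar{res-sub} and \ar{res-dom}.
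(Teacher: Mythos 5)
Your proposal is correct and takes essentially the same route as the paper: the paper presents these formulas as the output of directly computing the factorization $\psi_3^\#$ (i.e.\ $\phi_3^\#$ rewritten in the subring $\mathbb{C}[a,b,c,u,v]$) in Macaulay2, which is exactly the mechanical expansion-plus-substitution you describe, and your matrix identity $\phi_3^\#(m_I)=\pi X_1 T X_2 T X_3 (1,1)^\top$ is a correct specialization of the Baum formula for moments using $T(1,1)^\top=(1,1)^\top$. One harmless nit: the subring consistency check does not ``force'' the $a^2$ coefficient of $m_{13}$ to vanish --- no such term can arise in the first place, since each expansion is linear in the entries of $\pi$ and hence of degree at most one in $a_0$.
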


We will eventually prove the \nr{res-bir} by marginalization to the case $n=3$, which we can prove here:

\begin{prop}{%
\label{res-rec3}
The following triangular set of equations hold on the graph of $\psi_3$, after clearing denominators, and can thus be used to recover parameters from observed moments where the denominators are non-zero:
\begin{align*}
b &= \frac{m_3-m_2}{m_2-m_1}\\
u &= \frac{m_1m_3-m_2^2+m_{23}-m_{12}}{2(m_3-m_2)}\\
a &= m_1 - u\\
c&= a-ba+m_2-m_1\\
v&=a^2-\frac{m_1m_2-m_{12}}{b}
\end{align*}
}\end{prop}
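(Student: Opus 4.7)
The plan is a direct verification: substitute the explicit polynomial expressions for the moments from \ar{res-psi3} into the five proposed recovery formulae and check each identity in the polynomial ring $\CC[a,b,c,u,v]$. Since the claim is only that these equations hold on the graph of $\psi_3$, every identity reduces to a polynomial equality after multiplying through by the stated denominators, so no algebraic geometry is really needed beyond bookkeeping.

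First I would work out the two simple differences of consecutive first moments. From \ar{res-psi3},
\[
m_2 - m_1 = a(b-1) + c, \qquad m_3 - m_2 = ab(b-1) + bc = b\bigl(a(b-1)+c\bigr),
\]
so $m_3 - m_2 = b(m_2-m_1)$, which immediately gives the first formula $b = (m_3-m_2)/(m_2-m_1)$. The formula $a = m_1 - u$ is literal ($m_1 \mapsto a+u$), and rearranging the expression for $m_2 - m_1$ above gives $c = a - ba + m_2 - m_1$. This handles three of the five lines with essentially no work.

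The formula for $v$ follows from noting that the only term of $m_{12}$ not cleanly involving $u$ is $bv$: direct expansion shows
\[
m_1 m_2 - m_{12} = (a+u)(ab+c+u) - (abu + ac + au + cu + u^2 + bv) = b(a^2 - v),
\]
giving $v = a^2 - (m_1m_2 - m_{12})/b$ after dividing by $b$.

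The main obstacle, and the only step where cancellations are not immediate, is the identification of $u$. Here I would expand $m_1 m_3 - m_2^2$ and $m_{23} - m_{12}$ separately, observing that most of the high-degree terms cancel against each other. Concretely, I expect the combined expression to simplify to $2bu\bigl(a(b-1)+c\bigr) = 2bu(m_2 - m_1)$; dividing by $2(m_3-m_2) = 2b(m_2-m_1)$ then yields $u$, establishing the second line. Once all five identities are verified as polynomial equalities (after clearing denominators), the proposition follows, with the proviso on the denominators being built into the statement.
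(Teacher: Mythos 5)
Your proposal is correct and matches the paper's own proof, which likewise verifies the formulae by direct substitution of the expressions in \ar{res-psi3} (the paper merely adds that the formulae were originally found via lexicographic Gr\"obner bases of the graph ideal in Macaulay2). The one computation you only sketch does work out: $m_1m_3-m_2^2+m_{23}-m_{12}=2bu\bigl(a(b-1)+c\bigr)=2u(m_3-m_2)$, so the $u$ formula holds as claimed.
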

\noindent (This proposition and the following corollary actually hold for all $\phi_n$ with $n\geq 3$, because of \ar{res-marg}, and by \ar{sec-hmms}, these same formulae can be used to recover parameters for $\hmm{2,k,n}$ when $k>2$ as well.)
\begin{proof}
These equations can be checked with direct substitution by hand from \ar{res-psi3}.  Regarding the derivation, they can be obtained in Macaulay2 by computing two Gr\"obner bases of the elimination ideal $I=\langle m_v-\phi_3(m_v)|v\in \labs ^3\rangle$ over the ring $\maff{3}$, in Lex monomial order: once in the ring $\mring{3} [v,c,a,b,u]$, and once in $\mring{3} [v,c,u,b,a]$.  Each variable occurs in the leading term of a some generator in one of these two bases with a simple expression in moments as its leading coefficient.  We solve each such generator (set to $0$) for the desired parameter.
\end{proof}

\begin{cor}
\label{res-rho3}
The map $\psi_3: \CC^5 \to \bhmv{3}$ has a birational inverse $\rho_3$.  The map $\rho_3^\#$  on moment coordinate functions is given by:
\newcommand{\num}{\operatorname{num}}
\begin{align*}
a &\mapsto \frac{m_2^2+m_3m_1-2m_2m_1-m_{23}+m_{12}}{2(m_3-m_2)}
&u & \mapsto \frac{-m_2^2+m_3m_1+m_{23}-m_{12}}{2(m_3-m_2)}\\
b &\mapsto \frac{m_3-m_2}{m_2-m_1}
&v & \mapsto \frac{\num(v)}{4(m_3-m_2)^2}\\
c & \mapsto \frac{\num(c)}{2(m_2-m_1)(m_3-m_2)}, \te{ where}
\end{align*}
\begin{align*}
\num(c) =&-m_1m_2^2+m_1^2m_3+m_2^2m_3-m_1m_3^2-m_1m_{12}\\
 & +2m_2m_{12}-m_3m_{12}+m_1m_{23}-2m_2m_{23}+m_3m_{23}, \te{ and}\\
\num(v)=& \hspace{1ex}m_2^4-2m_1m_2^2m_3+m_1^2m_3^2-2m_2^2m_{12}-2m_1m_3m_{12}+4m_2m_3m_{12}\\
&+4m_1m_2m_{23}-2m_2^2m_{23}-2m_1m_3m_{23}+m_{12}^2-2m_{12}m_{23}+m_{23}^2.
\end{align*}
\end{cor}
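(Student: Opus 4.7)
The plan is to deduce the explicit formulae for $\rho_3^\#$ directly from the triangular recovery relations of \ar{res-rec3} by back-substitution, and then verify that the resulting rational map is a genuine inverse of $\psi_3$ on a dense open set. No new ingredients beyond \ar{res-rec3} and \ar{res-factor} are needed.

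First I would observe that the formulae for $b$ and $u$ in \ar{res-rec3} are already expressed purely in terms of moments, so they can be read off directly. Then, since $a = m_1 - u$, substituting the expression for $u$ and bringing to a common denominator gives
\[
a \;=\; \frac{2m_1(m_3-m_2)-(m_1m_3-m_2^2+m_{23}-m_{12})}{2(m_3-m_2)}
\;=\; \frac{m_2^2+m_1m_3-2m_1m_2-m_{23}+m_{12}}{2(m_3-m_2)},
\]
which matches the claimed expression. For $c$, I would rewrite $c = a-ba+m_2-m_1 = a(1-b)+(m_2-m_1)$, substitute the just-derived expression for $a$ and $b=(m_3-m_2)/(m_2-m_1)$, and simplify over the common denominator $2(m_2-m_1)(m_3-m_2)$; the resulting numerator is the polynomial $\num(c)$ of the corollary. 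For $v$, I would use $v = a^2 - (m_1m_2-m_{12})/b$, substitute the expressions for $a$ and $b$, and bring everything to the denominator $4(m_3-m_2)^2$; the resulting numerator is $\num(v)$. Each of these back-substitutions is a purely mechanical polynomial expansion.

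Having produced the rational map $\rho_3$, to conclude that it is a birational inverse of $\psi_3$ it suffices to check that $\rho_3\circ\psi_3 = \mathrm{id}$ as a rational map on $\CC^5$. But this is exactly the content of \ar{res-rec3}: on the open locus where $(m_3-m_2)(m_2-m_1)b \neq 0$ after pulling back through $\psi_3$ (a nonempty open subset of $\CC^5$, since $b \mapsto b$ under $\psi_3^\#$ and the moment differences are nonzero generically), each recovery equation expresses the input parameter as the value of the corresponding rational function of moments. Combined with the dominance of $\psi_3$ onto $\bhmv{3}$ from \ar{res-factor}, this makes $\rho_3$ a two-sided rational inverse, so $\psi_3$ is birational.

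The main obstacle I expect is the clerical one of verifying by hand that the back-substitutions for $\num(c)$ and particularly $\num(v)$ produce exactly the polynomials written in the corollary. Since $\num(v)$ comes from squaring a five-term rational function and subtracting a further rational term, the expansion is long but entirely straightforward; in practice it is best confirmed by a single symbolic calculation in Macaulay2, mirroring the approach used in the derivation of \ar{res-rec3}.
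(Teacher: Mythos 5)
Your proposal is correct and follows essentially the same route as the paper: the explicit formulae are obtained by back-substituting the triangular relations of \autoref{res-rec3}, and birationality is confirmed by checking that $\psi_3^\#\circ\rho_3^\#$ is the identity on the parameters (the paper's Macaulay2 check), which together with the dominance of $\psi_3$ from \autoref{res-factor} gives the inverse. Your explicit remark that a one-sided generic inverse plus dominance suffices is a minor (and welcome) elaboration of what the paper leaves implicit, not a different argument.
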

\begin{proof}This can be derived by substituting the solutions for $u$, $a$, and $b$ in the previous propositions into the subsequent solutions for $a$, $c$, and $v$.  Alternatively, it can be checked by direct substitution in Macaulay2, \ie, one computes that $\psi_3^\#\circ\rho^\#(\theta)=\theta$ for each birational parameter $\theta\in \{a,b,c,u,v\}$.\end{proof}

The expressions in \ar{res-rho3} are considerably simpler in moment coordinates than in probabilities.  Comparing the number of terms, the numerators for $a,b,c,u,v$ respectively have sizes 5, 2, 10, 4, and 12 in moment coordinates, versus sizes 22, 4, 56, 22, and 190 in probability coordinates.  This explains in part why Macaulay2's Gr\"obner basis computations execute in moment coordinates with much less time and memory.

\subsection{Statistical interpretation of the birational inverse $\rho_3$}\label{sec-intbir}

It turns out that the factors appearing in the denominators of \ar{res-rho3} defining $\rho_3$ have simple factorizations in terms of the rational and birational parameters:

\begin{itemize}{
\item $m_3-m_2$ appears in the denominator of all $\rho_3(\theta)$ except $\rho_3(b)$, and 
$$m_3-m_2 \ \ \stackrel{\psi_3}{\mapsto} \ \ (b)(ab-a+c)  \ \ \stackrel{\fq}{\mapsto} \ \  (b)(v_0)(a_0b-a_0+c_0)$$
\item $m_2-m_1$ appears in the denominator of $\rho_3(b)$ and $\rho_3(c)$, and 
$$m_2-m_1  \ \ \stackrel{\psi_3}{\mapsto} \ \  ab-a+c \ \ \stackrel{\fq}{\mapsto} \ \   (v_0)(a_0b-a_0+c_0)$$
}\end{itemize}
Let us pause to reflect on the meaning of these factors.

\begin{itemize}

\item The factor $v_0$ occurs in $det(E)=2v_0$, hence $v=v_0^2=0$ iff the hidden Markov chain has ``no effect'' on the observed variables.  The image locus $\phi_3(\{v_0=0\})$ can thus be modeled by a sequence of IID coin flips with distribution $E_0=E_1=(1-u,u)$, so the BHMM is an unlikely model choice.  This is a {\bf one-dimensional submodel}, parametrizable by $u\in [0,1]$, with a regular (everywhere-defined) inverse given simply by $u=m_1$.  Denote this model by $\biid{n}$.

\item The factor $b$ occurs in $det(T)=b$, hence $b=0$ iff each hidden node has ``no effect'' on the subsequent hidden nodes.  In this case, 
the observed process can be modeled as a sequence of independent coin flips, the first flip having distribution $(1-\alpha, \alpha) := \pi E$ 
and subsequent flips being IID having distribution $(1-\beta,\beta):=T_0 E = T_1 E$.  The image locus $\phi_3(\{b=0\})$ is hence a {\bf two-dimensional submodel}, parametrizable by $(\alpha,\beta)\in[0,1]^2$, with a regular inverse given by $\alpha=m_1, \beta=m_2$.  Denote this model by $\binid{n}$, for ``binary independent nearly identically distributed'' model, and note that ${\binid{n}}\sps {\biid{n}}$ by setting $\alpha=\beta$.

\item The factor $a_0b-a_0+c_0$ occurs in $\pi T - \pi = \tfrac{1}{2}(-a_0b+a_0-c_0, \ a_0b-a_0+c_0)$.  Hence $a_0b-a_0+c_0=0$ iff $\pi$ is a fixed point of $T$, i.e. 
the hidden Markov chain is {\em at equilibrium}.  We may define the Equilibrium Binary Hidden Markov model by restricting $\phi_n$ to the locus $\{a_0b-a_0+c_0=0\})$, which turns out to yield a {\bf four-dimensional submodel} for each $n\geq 3$.   Denote this submodel by $\ebhmm{n}$.
\end{itemize}

It can be easily shown, with the same methods used here for $\bhmm{n}$, that $\ebhmm{n}$ itself has a birational parametrization by $(a_0v_0,b,u,v_0^2)=(a,b,u,v)$, where $a_0,b\in[-1,1],$  $c_0:=a_0(1-b)\in[|b|-1,1-|b|]$, $v_0\in[0,1],$ and $u\in[|v_0|,1-|v_0|]$, with an inverse parametrization given by 
\begin{align*}
b &= \frac{m_1^2-m_{13}}{m_1^2-m_{12}}
& u &= \frac{2m_1m_{12} - m_1m_{13} - m_{123}}{2(m_1^2-m_{13})}\\
a &= m_1-u
& v &= \frac{a^2b-m_1^2+m_{12}}{b}
\end{align*}

\noindent The newly occurring denominators here are $m_1^2 - m_{12}=(b)(a^2-v)=(b)(v_0)^2(a_0^2-1)$ and $m_1^2-m_{13}=(b)^2(a^2-v)=(b)(v_0)^2(a_0^2-1)$.  It easy to check that the only points of $\ebhmm{n}$ where these expressions vanish are points that lie in $\binid{n}$.  Thus, for $n\geq 3$, $\bhmm{n}$ can be stratified as a union of three statistically meaningful submodels 
\begin{align*}
\bhmm{n} &= {\binid{n}} & \la \textrm{2 dimensional} \\
&\phantom{} \cup \big({\ebhmm{n}} \setminus {\binid{n}} \big) &  \la \textrm{4 dimensional}\\
&\phantom{} \cup \big(\bhmm{n} \setminus \big({\ebhmm{n}} \cup {\binid{n}} \big)\big) &  \la \textrm{5 dimensional}
\end{align*}
each of which has an everywhere-defined inverse parametrization.

\subsection{Computational advantages of moments, cumulants, and birational parameters}
Our approach has been to work with moments $m_v$ and cumulants $k_v$ instead of probabilities $p_v$, and the birational parameters $a,b,c,u,v$ instead of the matrix entries $\pi_1, t_{i1}, e_{i1}$.  Other than the theoretical advantage that the model map is generically injective on the birational parameter space, significant computation gains in Macaulay2 also result from these choices (see \ar{sec-der} for laptop specifications):

\begin{itemize}

\item Computing $\te{ker}\, \psi_3= \te{ker}\, \phi_3$, the affine defining ideal of $\bhmv{3}$, took less than 1 second in Macaulay2 when using the birational parameters, compared to 25 seconds when using the matrix entries and moments, and 15 minutes when using the matrix entries and probabilities.

\item Computing $\te{ker}\, \psi_4= \te{ker}\, \phi_4$, the affine defining ideal of $\bhmv{4}$ took less than 1 second in Macaulay2 when using the birational parameters and {\em cumulant coordinates} \citep{SZ11}, compared to 1.5 hours when using the matrix entries and cumulant coordinates, and running out of memory (8gb) when using the matrix entries and probabilities.
\end{itemize}

\section{Parametrizing BHMMs though a trace variety}\label{sec-trace}
In this section, we exhibit a parametrization of every BHMM through a particular {\em trace variety} called $\spec \tralg$, which itself can be embedded in  $\CC^{10}$.  We use these coordinates to prove the \nr{res-bir} and the \nr{res-sub}, which were stated without proof.  

For this, we will define a map $\phi_\infty$ through which all the $\phi_n$ factor, and using a version of the Baum formula for moments, we factor this map further though $\spec \tralg$.  Then we use a finite set $10$ of generators of the ring $\tralg$ exhibited by \citep{Sib68} to show that the image of  $\phi_\infty$ lands in the desired subring $\nring$, and write $\psi_\infty$ for the factorization. Finally, by marginalizing to the case $n=3$, we obtain a birational inverse for $\psi_n$ from the map $\rho_3$ given in \ar{res-rho3}.

\subsection{Marginalization maps}

For each pair of integers $n' \geq n \geq 1$, the {\em marginalization map} $\mu^{n'}_{n} : \paff{n'} \to \paff{{n}}$ is given by
$${\mu^{n'}_{n}}^\# (p_v) := \sum_{|w|=n'-{n}} p_{vw}$$
These restrict to maps $\mu^{n'}_{n}: \spaff{n'}\to\spaff{{n}}$, and define {\em rational} maps $\mu^{n'}_{n}: \ppro{n'} \dra \ppro{{n}}$.
In moment coordinates, these maps are actually coordinate projections: 
${\mu^{n'}_{n}}^\# (m_v) = m_{v\ol 0}$
where $\ol 0$ denotes a sequence of ${n'-n}$ zeros.  In fact, using the subset notation for moments $m_I$, the corresponding ring maps are literal inclusions: ${\mu^{n'}_{n}}^\# (m_I) = m_{I}$.  In other words, $\mu^{n'}_{n}: \maff{n'} \to \maff{{n}}$ is just the map which forgets those $m_I$ where $I\nsubseteq [n]$.

\subsection{The Baum formula for moments}\label{sec-baum}

Equation \eqref{eqn-bhmmap} involves $O(2^n)$ addition operations.  There is a faster way to compute $\phi_n\us(p_v)$, using $O(n)$ arithmetic operations, by treating the BHM process as a {\em finitary process} \citep{Sch11}.  We define two new matrices\footnote{$P$ can be thought of naturally as a $2\times2\times2$ tensor, but we will not make use of this interpretation.}
\begin{align*}
(P_i)_{jk} &:= E_{ji}T_{jk} = \pr(V_t=i\te{ and } H_{t+1}=k\, | \,H_t=j \te{ and } \pi, E,T), \te{ that is,}\\[2ex]
P_0 &:= \bmat{
T_{00}E_{00} & T_{01}E_{00}  \\ 
T_{10}E_{10} & T_{11}E_{10}} \quad \te{and} \quad 
P_1 := \bmat{
T_{00}E_{01} & T_{01}E_{01}  \\ 
T_{10}E_{11} & T_{11}E_{11}}
\end{align*}
Writing $\one $ for the vector $(\smat{1\\ 1})$ we obtain the matrix expression 
$\phi\us(p_v) = \pi P_{v_1}P_{v_2}\cdots P_{v_v}\one$
which involves only $4n +2$ multiplications  and $2n+1$ additions.   This is known as the Baum formula.  We can rewrite this formula as a trace product of $2\times 2$ matrices:
\begin{align*}
\phi\us(p_v) &= \tr(\pi P_{v_1}P_{v_2}\cdots P_{v_n}\one ) = \tr((\one \pi) P_{v_1}P_{v_2}\cdots P_{v_n} )
\end{align*}
To create an analogue of this formula in moment coordinates, we let
\begin{align*}
M_0 &:= P_0+P_1 = T & M_1 &:= P_1 & M_2 := \one \pi = \bmat{\pi_0 & \pi_1 \\ \pi_0 & \pi_1}
\end{align*}
\begin{prop}[Baum formula for moments]\label{res-baum}The binary hidden Markov map $\phi_n$ can be written in moment coordinates as
$$\phi_n^\# (m_v) = \tr(M_2 M_{v_1}M_{v_2}\cdots M_{v_n})$$
\end{prop}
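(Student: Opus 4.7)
The plan is to derive the moment Baum formula directly from the probability Baum formula, using the linear relation \eqref{eqn-mom} between moments and probabilities together with multilinearity of matrix products.

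First I would recall that the probability Baum formula, rewritten as a trace as in the discussion just before the proposition, gives
\[
\phi_n^\#(p_w) \;=\; \tr\!\left(M_2\, P_{w_1} P_{w_2} \cdots P_{w_n}\right)
\]
for every $w \in \labs^n$. Next, I would apply $\phi_n^\#$ to the definition $m_v = \sum_{w \geq v} p_w$ (using ``$v \geq w$ iff $v_i \geq w_i$ for all $i$'') to obtain
\[
\phi_n^\#(m_v) \;=\; \sum_{w \geq v} \tr\!\left(M_2\, P_{w_1} P_{w_2} \cdots P_{w_n}\right).
\]
The condition $w \geq v$ means: whenever $v_i = 1$ then necessarily $w_i = 1$, while for each index $i$ with $v_i = 0$ the coordinate $w_i$ ranges freely over $\labs$. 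Hence the sum over $w \geq v$ factors index-by-index, and by linearity of trace and multilinearity of matrix multiplication I can push the sum inside to get
\[
\phi_n^\#(m_v) \;=\; \tr\!\left(M_2\, N_{v_1} N_{v_2} \cdots N_{v_n}\right),
\]
where $N_{v_i} := P_1$ if $v_i = 1$ and $N_{v_i} := P_0 + P_1$ if $v_i = 0$.

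The proof then finishes in one line by recognising $N_{v_i} = M_{v_i}$: indeed $M_1 = P_1$ by definition, and $M_0 = T = P_0 + P_1$, the latter equality being a direct computation from the definitions of $P_0$ and $P_1$ in \autoref{sec-baum} (since the rows of $E$ sum to $1$, so $E_{j0} + E_{j1} = 1$ and hence $(P_0)_{jk} + (P_1)_{jk} = T_{jk}$). I do not expect any real obstacle here; the only subtlety is bookkeeping the sum-over-$w\geq v$ as a tensor of index-wise sums so that it commutes past the matrix multiplication, and this is immediate once written out.
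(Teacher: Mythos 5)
Your proposal is correct and is essentially the paper's own proof: both apply $\phi_n^\#$ to $m_v=\sum_{w\geq v}p_w$, use the trace form of the Baum formula, push the sum inside the product index-by-index, and identify $\sum_{w_i\geq v_i}P_{w_i}$ with $M_{v_i}$ (using $M_0=P_0+P_1=T$ and $M_1=P_1$, which the paper builds into the definition of $M_0$). No gaps.
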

\noindent For example, $\phi_n^\# (m_{01001}) = \tr(M_2 M_{0}M_{1}M_{0}M_{0}M_{1})$.
\begin{proof}
By our definition of $m_v$ \eqref{eqn-mom}, we have 
\begin{align*}
\phi_n\us (m_v) &=  \sum_{w\geq v} \phi_n\us (p_w) =\sum_{w\geq v} \tr((\one \pi) P_{w_1}P_{w_2}\cdots P_{w_n} )\\
&=\tr\left((\one \pi)
\left(\sum_{w_1\geq v_1}P_{w_1}\right)
\left(\sum_{w_2\geq v_2} P_{w_2}\right)
\cdots 
\left(\sum_{w_n\geq v_n} P_{w_n}\right) 
\right)\\[2ex]
&= \tr(M_2 M_{v_1}M_{v_2}\cdots M_{v_n}) \qedhere
\end{align*}
\end{proof}
\subsection{Truncation and $\phi_\infty$}\label{sec-trunc}

\begin{prop}\label{res-marg}The binary hidden Markov maps $\phi_n$ form a directed system of maps under marginalization, meaning that, for each $n'\geq n \geq 1$, the following diagrams commute:
\begin{center}
\begin{tikzpicture}[node distance=3cm, auto, semithick, >=stealth']
  \node (Ct) {$\taff$};
  \node (X) [right of=Ct, node distance=2cm]{};
  \node (Vn) [below of=X, node distance=1cm] {$\smaff{n}$};
  \node (Vnk) [above of=X, node distance=1cm] {$\smaff{n'}$};
  \draw[->] (Ct) to node {$\phi_{n'}$} 	(Vnk);
  \draw[->] (Ct) to node [swap] {$\phi_{n}$}	(Vn);
  \draw[->] (Vnk) to node {$\mu^{n'}_{n}$}	(Vn);
  \node (Rt) [right of=Ct, node distance=6cm] {$\tring$};
  \node (X2) [right of=Rt, node distance=2cm]{};
  \node (Rn) [below of=X2, node distance=1cm] {$\smring{n}$};
  \node (Rnk) [above of=X2, node distance=1cm] {$\smring{n'}$};
  \draw[<-] (Rt) to node {$\phi_{n'}^\#$} 	(Rnk);
  \draw[<-] (Rt) to node [swap] {$\phi_{n}^\#$}	(Rn);
  \draw[<-] (Rnk) to node {${\mu^{n'}_{n}}^\#$}	(Rn);
\end{tikzpicture}
\end{center}
\end{prop}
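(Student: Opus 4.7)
The plan is to prove the ring diagram (the affine map diagram then follows by applying $\operatorname{Spec}$). Since both $\smring{n}$ and $\smring{n'}$ are generated as $\mathbb{C}$-algebras by the moment coordinates $m_I$ with $I \subseteq [n]$ and $I \subseteq [n']$ respectively, it suffices to verify that for every $I \subseteq [n]$,
$$\phi_{n'}^\#\bigl({\mu^{n'}_{n}}^\#(m_I)\bigr) = \phi_n^\#(m_I).$$
By the description of ${\mu^{n'}_n}^\#$ in moment coordinates (just before Section 3.3), ${\mu^{n'}_{n}}^\#(m_I) = m_I$, where on the right $I$ is reinterpreted as a subset of $[n']$. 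If $v$ denotes the length-$n$ indicator string of $I \subseteq [n]$, then as a subset of $[n']$ the string becomes $v\bar{0}$, where $\bar 0$ is a string of $n'-n$ zeros.

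With this reduction in hand, I would invoke the Baum formula for moments (Proposition \ref{res-baum}) on both sides. On the left,
$$\phi_{n'}^\#(m_{v\bar 0}) = \operatorname{tr}\bigl(M_2 M_{v_1}\cdots M_{v_n} M_0^{n'-n}\bigr),$$
while on the right, $\phi_n^\#(m_v) = \operatorname{tr}(M_2 M_{v_1}\cdots M_{v_n})$. The proof then reduces to the purely matrix-algebraic identity
$$\operatorname{tr}\bigl(M_2 M_{v_1}\cdots M_{v_n} M_0^{k}\bigr) = \operatorname{tr}\bigl(M_2 M_{v_1}\cdots M_{v_n}\bigr)$$
for every $k \geq 0$. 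By cyclicity of the trace this amounts to showing $M_0^k M_2 = M_2$. Since $M_0 = T$ and $M_2 = \mathbf{1}\pi$, and since $T\mathbf{1} = \mathbf{1}$ (the row sums of $T$ equal $1$, which holds even on all of $\taff$ by the definition of $\tring$), we get $T M_2 = (T\mathbf{1})\pi = \mathbf{1}\pi = M_2$, and the claim follows by induction on $k$.

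I do not anticipate a real obstacle; the only subtlety is carefully lining up the indexing conventions so that the inclusion $I \subseteq [n] \hookrightarrow [n']$ corresponds on binary-string indices to appending zeros, after which the Baum formula reduces the proposition to the one-line identity $T\mathbf{1}=\mathbf{1}$. As a sanity check, I would verify the $n=1$, $n'=2$ case by hand to make sure the convention for $M_2$ at the left end, and for appending $M_0$'s at the right end, are consistent with the statement.
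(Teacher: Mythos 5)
Your proof is correct and follows essentially the same route as the paper: the paper's own argument also observes that marginalization in moment coordinates corresponds to appending zeros, applies the Baum formula for moments, and uses the fact that $M_0=T$ is stochastic (so $M_0M_2=M_2$, equivalently $T\one=\one$) to absorb the extra $M_0$ factors. Your explicit use of trace cyclicity and induction on $k$ just spells out the step the paper leaves implicit.
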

\begin{proof}
This can be seen directly from the definition of $\phi_n$ using \eqref{eqn-bhmmap} and of $m_v$ in \eqref{eqn-mom}.  Alternatively, observe that because $M_0=T$ is stochastic, $M_0M_2 = M_2$, so for any sequence $\ol 0$ of length $n'-n$, the \nameref{res-baum} (\ar{res-baum}) implies that
\begin{equation}\label{eqn-trunc}
\phi_{n'}^\# (m_{v\ol 0}) = \phi_n^\# (m_v)\qedhere
\end{equation}
\end{proof}
Thus, to compute $\phi_n$ for all $n$, it is only necessary to compute those $\phi_n\us m_{v'}$ where $v'$ ends in $1$.  Motivated by this observation, let
$\smring{\infty} := \CC[m_{v1}\; | \; v\in \labs^n \te{ for some } n\geq 0]
=\CC[m_1, m_{01}, m_{11}, m_{001}, m_{101}, m_{011},\ldots]
$,
which in subset index notation is simply
\begin{align*}
\smring{\infty} &:= \CC[m_I\; | \; I\sbs [n] \te{ for some } n\geq 0]\\
&=\CC[m_1, m_{2}, m_{12}, m_{3}, m_{13}, m_{23},\ldots]
\end{align*}
Then we define
$\phi_\infty: \taff \to \spec \smring{\infty}$ and 
$\phi_\infty\us: \tring \la \smring{\infty}$
by the formula 
$\phi_\infty\us(m_{v1\ol 0}) := \phi\us_{\te{length}(v1)}(m_{v1})$, i.e.
\begin{align}\label{eqn-phiinf}
\phi_\infty\us(m_{I}) &:= \phi\us_{\te{size}(I)}(m_I)
\end{align}
Note that by locating the position of the last $1$ in a binary sequence $v'\neq 0\ldots 0$, we can write $v'$ in the form $v1\ol 0$ for a unique string $v$ (possibly empty if $v'=1$), so this map is  well-defined.  By the same principle, for each $n$ we can also define a ``truncation'' map 
$\trunc: \; \spec \smring{\infty} \to \smaff{n}$ by 
$\trunc\us (m_{v1\ol 0}) := m_{v1}$, 
which, in subset index notation, is a literal ring inclusion:
\begin{align}
\label{eqn-trunc} \trunc\us (m_I) &:= m_I
\end{align}

With this definition, $\phi_n\us$ factorizes as 
$\phi_n\us = \phi_\infty\us \circ \trunc_n\us$.  
We can summarize this and \ar{res-marg} as follows:

\begin{prop}\label{res-dir}For all $n'\geq n \geq 1$, the following diagrams commute:
\begin{center}
\begin{tikzpicture}[node distance=2cm, auto, semithick, >=stealth']
  \node (Co){$\taff$};
  \node (X2) [below of =Ct, node distance=2cm]{};
  \node (Cn) [left of =X2, node distance=2.5cm] {$\smaff{n}$};
  \node (Cnk) [right of =X2, node distance=0cm] {$\smaff{n'}$};
  \node (Cinf) [right of =Cnk, node distance=2.5cm] {$\spec \smring{\infty}$};
  \draw[->] (Co) to node [swap] {$\phi_{n}$}	(Cn);
  \draw[->] (Co) to node {$\phi_{n'}$} 	(Cnk);
  \draw[->] (Co) to node {$\phi_{\infty}$} 	(Cinf);
  \draw[<-] (Cn) to node  [swap] {${\mu^{n'}_n}$}	(Cnk);
  \draw[<-] (Cnk) to node  [swap] {$\trunc_{n'}$}	(Cinf);
  \node (Ro) [right of = Co, node distance=8cm]{$\tring$};
  \node (X) [below of =Ro, node distance=2cm]{};
  \node (Rn) [left of =X, node distance=2.5cm] {$\smring{n}$};
  \node (Rnk) [right of =X, node distance=0cm] {$\smring{n'}$};
  \node (Rinf) [right of =Rnk, node distance=2.5cm] {$\smring{\infty}$};
  \draw[<-] (Ro) to node [swap] {$\phi_n\us$}	(Rn);
  \draw[<-] (Ro) to node {$\phi_{n'}\us$} 	(Rnk);
  \draw[<-] (Ro) to node {$\phi_{\infty}\us$} 	(Rinf);
  \draw[->] (Rn) to node  [swap] {${\mu^{n'}_n}^\#$}	(Rnk);
  \draw[->] (Rnk) to node  [swap] {$\trunc\us_{n'}$}	(Rinf);
  \end{tikzpicture}
\end{center}
\end{prop}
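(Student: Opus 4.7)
The plan is to decompose each diagram of Proposition \ref{res-dir} into two commutative subtriangles and verify each separately. The subtriangle comparing $\phi_n$ with $\phi_{n'}$ via $\mu^{n'}_n$ is precisely Proposition \ref{res-marg} and needs no further argument. I therefore focus on the subtriangle comparing $\phi_{n'}$ with $\phi_\infty$ via $\trunc_{n'}$.

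By the contravariant equivalence between affine schemes and rings, it suffices to verify at the ring level that $\phi_{n'}^\# = \phi_\infty^\# \circ \trunc_{n'}^\#$ as $\CC$-algebra maps $\smring{n'} \to \tring$. Both sides are $\CC$-algebra homomorphisms, so they are determined by their values on the generators $m_I$ with $\emptyset \neq I \subseteq [n']$. For such an $I$, the definition of the truncation map gives $\trunc_{n'}^\#(m_I) = m_I$ as a literal element of $\smring{\infty}$, and then by \eqref{eqn-phiinf} the composite $\phi_\infty^\# \circ \trunc_{n'}^\#$ sends $m_I$ to $\phi_k^\#(m_I)$, where $k$ is the integer determined by the position of the last nonzero coordinate of the binary string representing $I$. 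The required identity therefore reduces to
\[
\phi_{n'}^\#(m_I) \;=\; \phi_k^\#(m_I),
\]
which is exactly the invariance statement \eqref{eqn-subs}. That invariance was already established using the Baum formula for moments (Proposition \ref{res-baum}) together with the stochasticity relation $M_0 M_2 = M_2$: enlarging the length index corresponds to right-multiplying the trace product $\tr(M_2 M_{v_1} \cdots M_{v_k})$ by additional copies of $M_0$, which cyclicity of the trace converts into left-multiplication by $M_0$'s, and those absorb into $M_2$ via $M_0 M_2 = M_2$.

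Pasting the two verified triangles together produces the full left-hand diagram of the proposition, and applying $\spec$ (equivalently, reversing every arrow) yields the right-hand diagram. There is no serious obstacle here: the statement is essentially a bookkeeping assertion, and the only substantive input is the invariance \eqref{eqn-subs} and Proposition \ref{res-marg}, both already in hand. The one subtlety worth flagging is that the maps ${\mu^{n'}_n}^\#$ and $\trunc_{n'}^\#$ between moment rings become literal polynomial subring inclusions under the subset index notation, a convenience justified by Convention \ref{conv-mom} that streamlines the verification above.
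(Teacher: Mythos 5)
Your proposal is correct and takes essentially the same route as the paper, which presents this proposition as a summary of Proposition~\ref{res-marg} together with the defining formula \eqref{eqn-phiinf} for $\phi_\infty$ and the truncation maps; the generator-level check you spell out (reducing the right-hand triangle to the invariance \eqref{eqn-subs}, itself proved via the Baum formula, cyclicity of trace, and $M_0M_2=M_2$) is exactly the content the paper leaves implicit.
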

\begin{rem}\label{rem-col}
These diagrams exhibit the rings $\smring{n}$ and maps $\phi_n\us$ as a directed system under the inclusion maps ${\mu^{n'}_{n} }\us$, such that $\smring{\infty} = \operatorname{colim}_{n\to \infty} \smring{n}$ and 
$\phi_{\infty}^\# = \operatorname{lim}_{n\to \infty} \phi^\#_n$.
\end{rem}

Now, to prove that $\phi_n$ factors through $\fq$, we need only show that $\phi_\infty$ does.

\subsection{Factoring $\phi_\infty$ through a trace variety}\label{sec-birprooffact}
Let $X_0,X_1,X_2$ be $2\times 2$ matrices of indeterminates,
\begin{equation*}
X_0 = \bmat{
x_{000} & x_{001}  \\ 
x_{010} & x_{011}} \qquad 
X_1 = \bmat{
x_{100} & x_{101}  \\ 
x_{110} & x_{111}} \qquad 
X_2 = \bmat{
x_{200} & x_{201}  \\ 
x_{210} & x_{211}}
\end{equation*}
and following the notation of \citep{Dre05},
$\xalg := \CC[\te{entries of }X_0,X_1,X_2]$
denotes the polynomial ring on the entries $x_{ijk}$ of these three $2\times 2$ matrices.  The {\em trace algebra} $\tralg$ is defined as the subring of $\xalg$ generated by the traces of products of these matrices,
$\tralg :=\CC[\tr(X_{i_1}X_{i_2}\cdots X_{i_r}) \; | \; r \geq 1] \sbs \xalg$
and we refer to $\spec\tralg$ as a {\em trace variety}.  We write 
\begin{equation*}
\nu : \spec \xalg \to \spec \tralg \qquad \te{ and } \qquad
\nu\us : \tralg \hra \xalg
\end{equation*}
for the natural dominant map and corresponding ring inclusion.  To relate these varieties to binary HMMs , we define two new maps
$\omega\us :  \; \xalg \to \tring$ and $\xi\us : \; \smring{\infty} \to \tralg$ by
$$
\omega\us(X_i) := M_i \qquad \textrm{and} \qquad \xi\us (m_{v1}) := \tr\left(\left(X_2\prod_{i \in v} X_i\right)X_1\right).
$$
\begin{prop}[Baum factorization]\label{res-baumfact}The ring map $\phi_\infty\us$ factorizes as $\phi_\infty\us = \omega\us \circ \nu\us \circ \xi \us,$
i.e., the following diagram commutes:
\begin{center}
\begin{tikzpicture}[node distance=2cm, auto, semithick, >=stealth']
  \node (Co){$\taff$};
  \node (SpO) [below of =Co, node distance=2cm]{$\spec\xalg$};
  \node (SpC) [right of=SpO, node distance=3cm] {$\spec\tralg$};
  \node (Cinf) [right of =Co, node distance=3cm] {$\spec \smring{\infty}$};
  \draw[->] (Co) to node  [swap] {$\omega$}	(SpO);
  \draw[->] (SpO) to node [swap]  {$\nu$}	(SpC);
  \draw[->] (SpC) to node [swap]  {$\xi$}	(Cinf);
  \draw[->] (Co) to node  {$\phi_{\infty}$}	(Cinf);
  \end{tikzpicture}
\end{center}
\end{prop}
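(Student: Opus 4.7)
The plan is to verify the factorization $\phi_\infty^\# = \omega^\# \circ \nu^\# \circ \xi^\#$ on a generating set of $\smring{\infty}$, and then invoke the \nameref{res-baum} to reduce the identity to a purely symbolic manipulation of traces of products of $2\times 2$ matrices.

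First I would recall that $\smring{\infty}$ is generated as a $\CC$-algebra by the elements $m_{v1}$ where $v$ ranges over (possibly empty) binary strings, equivalently by $\{m_I : \emptyset \neq I \subseteq [n] \text{ for some } n\}$. Since $\phi_\infty^\#$, $\omega^\#\circ\nu^\#\circ\xi^\#$ are both $\CC$-algebra homomorphisms, it suffices to check the identity on each such $m_{v1}$. Fix such an element, say with $v = v_1\cdots v_{n-1}$, so that $m_{v1}$ has size $n$.

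Next I would unfold the right-hand side directly from the definitions. By definition of $\xi^\#$,
\[
\xi^\#(m_{v1}) \;=\; \tr\bigl(X_2 X_{v_1} X_{v_2} \cdots X_{v_{n-1}} X_1\bigr) \;\in\; \tralg.
\]
Applying $\nu^\#$, which is just the inclusion $\tralg \hookrightarrow \xalg$, leaves this expression unchanged as an element of $\xalg$. Now $\omega^\#$ is the $\CC$-algebra homomorphism $\xalg \to \tring$ sending each entry of $X_i$ to the corresponding entry of $M_i$; since the trace of a product of $2\times 2$ matrices is a polynomial in their entries, $\omega^\#$ commutes with forming trace-products, giving
\[
\omega^\#\bigl(\nu^\#\bigl(\xi^\#(m_{v1})\bigr)\bigr) \;=\; \tr\bigl(M_2 M_{v_1} M_{v_2} \cdots M_{v_{n-1}} M_1\bigr).
\]

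Finally I would invoke the \nr{res-baum}, which states precisely that $\phi_n^\#(m_w) = \tr(M_2 M_{w_1}\cdots M_{w_n})$ for any binary word $w$ of length $n$; applied to $w = v1$ this yields
\[
\phi_n^\#(m_{v1}) \;=\; \tr\bigl(M_2 M_{v_1}\cdots M_{v_{n-1}} M_1\bigr).
\]
Combined with the defining equation \eqref{eqn-phiinf} of $\phi_\infty^\#$, namely $\phi_\infty^\#(m_{v1}) = \phi_n^\#(m_{v1})$, this matches the computation above, establishing the desired equality on generators and hence on all of $\smring{\infty}$.

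There is no serious obstacle: the argument is pure unfolding of definitions once the \nameref{res-baum} is in hand. The only point requiring mild care is bookkeeping between the two index conventions (binary strings versus subsets of $[n]$) for $m_I$, and the observation that the generators $m_{v1}$ are exactly those moments for which no trailing zeros need be stripped, so that $\phi_\infty^\#$ is unambiguously defined on them by \eqref{eqn-phiinf}.
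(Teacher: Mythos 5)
Your proposal is correct and follows essentially the same route as the paper: the paper's proof is likewise a direct unfolding of the definitions of $\xi^\#$, $\nu^\#$, and $\omega^\#$ on the generators $m_{v1}$, reducing the claim to the \nameref{res-baum} (\ref{res-baum}) together with the defining equation \eqref{eqn-phiinf} of $\phi_\infty^\#$. Your extra remarks (checking only on generators, $\omega^\#$ commuting with trace-products) just make explicit what the paper leaves implicit.
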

\begin{proof}This is just a restatement of the \nameref{res-baum} (\ar{res-baum}): 
\begin{align*}
\omega\us (\nu\us(\xi\us (m_{v1}))) 
& =  \omega\us \tr\left(X_2\prod_{i \in v1} X_i\right) = \tr\left(M_2\prod_{i \in v1} M_i\right)\\
& = \phi_{length(v1)}\us (m_{v1}) = \phi_\infty\us (m_{v1}) \qedhere
\end{align*}
\end{proof}

\subsection{Proving the \nr{res-sub}}\label{sec-sub}
We begin by seeking a factorization of the map $\omega\us\circ \nu\us$.  For this we apply the following commutative algebra result of Sibirskii on the trace algebras $C_{2,r}$:

\begin{prop}[Sibirskii, 1968] The trace algebra $C_{2,r}$ is generated by the elements
\begin{align*}
\tr(X_i) \; : \; & 0\leq i \leq r\\
\tr(X_iX_j) \; : \; & 0\leq i \leq j \leq r\\
\tr(X_iX_jX_k) \; : \; & 0\leq i < j < k \leq r
\end{align*}
\end{prop}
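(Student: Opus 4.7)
The plan is to show that every trace of a product $\tr(X_{i_1}\cdots X_{i_s})$ of generic $2\times 2$ matrices lies in the subring generated by the proposed three families, by reducing along two axes: the length $s$ of the product, and, within each length class, the ordering and multiplicity of indices. The main tool is the $2\times 2$ Cayley--Hamilton identity and its consequences for traces.

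First I would recall $X^2 = \tr(X)X - \det(X)I$, with $\det(X)=\tfrac{1}{2}(\tr(X)^2-\tr(X^2))$ a polynomial in length-$\leq 2$ traces, and polarize it by substituting $A+B$ for $X$ to obtain the bilinear identity
\[
AB+BA \;=\; \tr(A)B+\tr(B)A+\bigl(\tr(AB)-\tr(A)\tr(B)\bigr)I.
\]
Right-multiplying by any word $W$ in the $X_i$ and taking the trace yields the swap identity
\[
\tr(ABW)+\tr(BAW)\;=\;\tr(A)\tr(BW)+\tr(B)\tr(AW)+\bigl(\tr(AB)-\tr(A)\tr(B)\bigr)\tr(W),
\]
which shows that interchanging two adjacent factors inside a trace of length $s$ alters the trace only by a polynomial in traces of strictly shorter products. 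Combined with the cyclic invariance of trace, this makes the swap identity the workhorse of the reduction.

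For $s\leq 3$ I would handle the non-listed cases directly. If an index repeats, say $\tr(X_iX_iX_j)$, Cayley--Hamilton gives $X_i^2=\tr(X_i)X_i-\det(X_i)I$, which reduces the product to length-$\leq 2$ traces. If $i,j,k$ are distinct but unordered, the swap identity expresses $\tr(X_{\sigma(i)}X_{\sigma(j)}X_{\sigma(k)})$ in terms of $\tr(X_iX_jX_k)$ with $i<j<k$ and length-$\leq 2$ traces, so the restriction to $i<j<k$ suffices. For $s\geq 4$ I would induct on $s$. The swap identity shows that, modulo traces of length $<s$, any permutation of the factors multiplies the trace by its sign. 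Since cyclic rotation by one position is an $s$-cycle of sign $(-1)^{s-1}$ but exactly preserves the trace, for even $s$ this forces $2\tr(X_{i_1}\cdots X_{i_s})\equiv 0$ modulo shorter traces, which completes the inductive step in characteristic zero. For odd $s\geq 5$, I would invoke the Amitsur--Levitzki identity $\sum_{\sigma\in S_4}\operatorname{sgn}(\sigma)\,X_{\sigma(1)}X_{\sigma(2)}X_{\sigma(3)}X_{\sigma(4)}=0$ (the $n=2$ case of Amitsur--Levitzki), multiply on the right by $X_{i_5}\cdots X_{i_s}$, take trace, and apply the swap identity to each of the $24$ summands so that each becomes $\operatorname{sgn}(\sigma)\tr(X_{i_1}\cdots X_{i_s})$ modulo shorter traces; this collapses the identity to $24\,\tr(X_{i_1}\cdots X_{i_s})\equiv 0$ modulo shorter traces, closing the induction.

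The main obstacle is the bookkeeping in the $s\geq 5$ induction: one must ensure that the error terms produced by the $O(s)$ swap rewrites applied inside each of the $24$ Amitsur--Levitzki summands are each genuine polynomials in traces of length $\leq s-1$, so that they lie in the inductively-constructed subalgebra and do not leak back into length $s$. Everything else is routine trace algebra. An alternative route, sidestepping Amitsur--Levitzki, is to quote Procesi's theorem that $C_{n,r}$ is generated by traces of products of length at most $2^n-1$ (which is $3$ when $n=2$) and then reduce the length-$3$ generators to the asserted form exactly as above.
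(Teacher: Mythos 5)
Your argument is correct, but it takes a different route from the paper for the simple reason that the paper does not prove this proposition at all: it is quoted as an external result of Sibirskii (1968), and the paper only uses its $r=3$ specialization (the ten generators of $C_{2,3}$ in the corollary that follows) as input to the factorization of $\omega^\#\circ\nu^\#$. Your derivation --- polarized Cayley--Hamilton giving the swap identity, cyclic invariance to kill even-length traces, and Amitsur--Levitzki composed with swaps to kill odd lengths $\geq 5$ --- is a sound, self-contained characteristic-zero proof of the generation statement. The bookkeeping worry you flag is not actually an obstacle: every correction term produced by a swap is a product of a trace of length $\leq 2$ with a trace of length $\leq s-1$, so by strong induction it already lies in the subalgebra generated by the listed elements, and since these relations are additive congruences with coefficients $\pm 1$, composing $O(s)$ of them inside each Amitsur--Levitzki summand causes no leakage back into length $s$. (Two small simplifications: the Amitsur--Levitzki step works uniformly for all $s\geq 4$, so the even/odd case split via the cyclic sign trick is unnecessary; and swaps in the interior of a word are reached by exact cyclic rotation, so only the stated form of the identity is needed.) What your approach buys is an effective rewriting procedure --- it shows concretely how any $\tr(X_{i_1}\cdots X_{i_s})$ collapses onto the ten generators, in the same spirit as the paper's explicit computation of $\tr(M_iM_jM_k)$ in terms of $a,b,c,u,v$; what the citation buys is brevity and the stronger content of Sibirskii's theorem (a first fundamental theorem for the $SL_2$ simultaneous-conjugation action, with minimality of the generating set), which your generation argument alone does not establish but which the paper also does not need.
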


\begin{cor}\label{res-trgens}The algebra $C_{2,3}$ is generated by the 10 elements
\begin{align*}
&\tr(X_0), \tr(X_1), \tr(X_2), \\
&\tr(X_0^2), \tr(X_1^2), \tr(X_2^2), \tr(X_0X_1), \tr(X_0X_2),  \tr(X_1X_2),  \\
&\tr(X_0X_1X_2)
\end{align*}
\end{cor}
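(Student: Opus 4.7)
The corollary is essentially a direct specialization of Sibirskii's proposition to the case of three $2 \times 2$ matrices $X_0, X_1, X_2$, followed by an elementary enumeration of indices.

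First, I would apply Sibirskii's first family to list the linear traces $\tr(X_i)$ as $i$ ranges over the three available indices $\{0, 1, 2\}$, producing the 3 generators $\tr(X_0), \tr(X_1), \tr(X_2)$. Second, I would enumerate the pairs $(i, j)$ satisfying $0 \leq i \leq j \leq 2$: these are $(0,0), (0,1), (0,2), (1,1), (1,2), (2,2)$, yielding the 6 quadratic generators $\tr(X_i X_j)$ listed in the middle line of the claim. Third, the cubic family requires the strict chain $0 \leq i < j < k \leq 2$, and the only such triple is $(0, 1, 2)$, contributing the single generator $\tr(X_0 X_1 X_2)$.

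Summing the three counts gives $3 + 6 + 1 = 10$ generators, matching the statement, and the listed elements are exactly those enumerated above. There is no real technical obstacle here: Sibirskii's theorem does all of the substantive algebraic work (showing that trace products of length $\geq 4$ and the omitted length-$\leq 3$ combinations are redundant), and the corollary reduces to a combinatorial bookkeeping step of enumerating monotone index tuples of length $1$, $2$, and $3$ drawn from $\{0,1,2\}$.
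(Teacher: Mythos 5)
Your proposal is correct and matches the paper's treatment: the paper offers no separate proof, treating the corollary as an immediate specialization of Sibirskii's proposition to three $2\times 2$ matrices, which is exactly the enumeration ($3+6+1=10$ monotone index tuples over $\{0,1,2\}$) you carry out. The only point worth noting is that the paper's statement of Sibirskii's proposition writes the index ranges as $0\leq i\leq r$ (etc.), which for $C_{2,3}$ is a slight notational mismatch with the three matrices $X_0,X_1,X_2$; your enumeration over $\{0,1,2\}$ reflects the intended meaning.
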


\begin{prop}\label{res-trfact}
The ring map $\omega\us\circ \nu\us$
factors through the inclusion $$\fq\us \; : \; \nring:=\CC[a,b,c,u,v] \; \hra \; \tring := \CC[a_0,b,c_0,u,v_0],$$
i.e. we can write $\omega\us\circ \nu\us=\fq\us\circ \fr\us$ so that the following diagram commutes: 
\begin{center}
\begin{tikzpicture}[node distance=2cm, auto, semithick, >=stealth']
  \node (Co) {$\taff$};
  \node (SpO) [below of =Co, node distance=2cm]{$\spec\xalg$};
  \node (SpC) [right of=SpO, node distance=3cm] {$\spec\tralg$};
  \node (Cn) [right of =Co, node distance=3cm] {$\naff$};
  \draw[->] (Co) to node [swap] {$\omega$}	(SpO);
  \draw[->] (SpO) to node [swap] {$\nu$}	(SpC);
  \draw[->] (Cn) to node  {$\fr$}	(SpC);
  \draw[->] (Co) to node {$\fq$}	(Cn);
  \end{tikzpicture}
\end{center}
\end{prop}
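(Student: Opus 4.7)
The plan is to combine Sibirskii's generating result (\ar{res-trgens}) with a direct evaluation of the ten trace generators of $\tralg$ at the specialization $X_i \mapsto M_i$, verify that each image lies in the subring $\fq\us(\nring) \sbs \tring$, and then invoke injectivity of $\fq\us$ to produce $\fr\us$ uniquely.

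First I would exploit three structural simplifications that make the ten trace evaluations tractable: $(i)$ since $M_2 = \one\pi$ has rank one, any trace against $M_2$ collapses via $\tr(A M_2)=\pi A\one$; $(ii)$ since $M_0=T$ is row-stochastic, $M_0\one=\one$, so $M_0M_2=M_2$; and $(iii)$ the Cayley--Hamilton identity $\tr(A^2)=(\tr A)^2-2\det A$ holds for every $2\times 2$ matrix, and the factorization $M_1 = P_1 = \operatorname{diag}(E_{01},E_{11})\,T$ gives $\det M_1 = (u^2-v)\,b$ immediately. Together these reduce each Sibirskii generator to a short explicit polynomial in $a_0, b, c_0, u, v_0$, which I would then verify lies in $\CC[a_0v_0,\, b,\, c_0v_0,\, u,\, v_0^2] = \fq\us(\nring)$. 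Most cases are immediate: several evaluate to constants or to polynomials in $\CC[b,u]$; $\tr(M_1M_2)$ simplifies to $u+a$ by a direct pairing; and $\tr(M_0M_1)$ works out, via $T_{11}-T_{00}=c_0$ and $T_{11}+T_{00}=1+b$, to $u\bigl((1+b)^2-2b\bigr)+c(1+b)$. The delicate case, $\tr(M_0M_1M_2) = \pi T P_1 \one$, expands to $u + (a_0b+c_0)v_0 = u+ab+c$.

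Since $\fq\us$ embeds $\nring$ as a polynomial subring of the integral domain $\tring$, once each Sibirskii generator is shown to land in $\fq\us(\nring)$, the ring homomorphism $\omega\us\circ\nu\us$ factors uniquely through $\fq\us$, defining the required ring map $\fr\us: \tralg\to\nring$ and hence the morphism $\fr$.

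The main obstacle is the cubic trace $\tr(M_0M_1M_2)$: it is the only Sibirskii generator in which all three ``odd'' parameters $a_0, c_0, v_0$ occur simultaneously, so a priori it could contain monomials (such as $a_0c_0$, $a_0^2$, or $c_0^2$) that swap-invariance from \ar{sec-linpar} would permit but the finer subring $\nring$ would forbid. The key cancellation, $\tfrac{1}{2}(1-(a_0b+c_0))(u-v_0) + \tfrac{1}{2}(1+(a_0b+c_0))(u+v_0) = u + (a_0b+c_0)v_0$, forces $a_0,c_0,v_0$ to appear only through the allowed products $a=a_0v_0$, $c=c_0v_0$, and $v=v_0^2$.
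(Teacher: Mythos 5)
Your proposal is correct and follows essentially the same route as the paper: apply $\omega\us$ to Sibirskii's ten generators of $\tralg$ and verify that each image lies in $\nring=\CC[a,b,c,u,v]\sbs\tring$, whence $\fr\us$ exists (and is unique since $\fq\us$ is injective). Your evaluations, including the key case $\tr(M_0M_1M_2)=u+ab+c$, agree with the explicit list in the paper's proof; the rank-one, stochasticity, and Cayley--Hamilton shortcuts are just an efficient organization of the same computation.
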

\begin{proof}
We apply $\omega\us$ to the ten generators of $\tralg$ given in \ar{res-trgens} and check that they land in $\nring$.  Explicit, we find that:
\begin{align*}
&\tr(M_0) = b+1 && \tr(M_1) = bu+c+u && \tr(M_2) =1\\
&\tr(M_0^2) = b^2+1 && \tr(M_1^2) = b^2u^2+2bcu \rlap{$+c^2+2cu+u^2+2bv$}\\
&\tr(M_2^2) = 1 && \tr(M_0M_1) = b^2u+bc+c+u && \tr(M_0M_2) = 1\\
&\tr(M_1M_2) = a+u && \tr(M_0M_1M_2) = ab+c+u\qedhere
\end{align*}
\end{proof}
Now, by letting $\psi_\infty\us:=\fr\us\circ\xi\us$ we may factor the ring map $\phi_\infty\us$ as 
$$\phi_\infty\us = \omega\us\circ \nu\us\circ \xi\us = \fq\us\circ \fr\us \circ\xi\us = \fq\us\circ \psi_\infty\us.$$

\begin{cor}\label{res-psibarinf} The following diagram commutes:
\begin{center}
\begin{tikzpicture}[node distance=3cm, auto, semithick, >=stealth']
  \node (Ceta) {$\naff$};
  \node (Co) [left of=Ceta, node distance=3cm] {$\taff$};
  \node (X) [below of =Ceta, node distance=2.5cm]{};
  \node (SpO) [below of =Co, node distance=2cm]{$\spec\xalg$};
  \node (SpC) [right of=SpO, node distance=3cm] {$\spec\tralg$};
  \node (Cinf) [right of =Ceta, node distance=3cm] {$\spec\smring{\infty}$};
  \draw[<-] (Ceta) to node [swap] {$\fq$}	(Co);
  \draw[->] (Ceta) to node [swap] {$\fr$}	(SpC);
  \draw[->] (Ceta) to node {$\psi_{\infty}$} 	(Cinf);
  \draw[->, bend left] (Co) to node {$\phi_{\infty}$} 	(Cinf);
  \draw[->] (Co) to node  [swap] {$\omega$}	(SpO);
  \draw[->] (SpO) to node [swap]  {$\nu$}	(SpC);
  \draw[->] (SpC) to node [swap]  {$\xi$}	(Cinf);
\end{tikzpicture}
\end{center}
\end{cor}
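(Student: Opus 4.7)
The plan is to observe that Corollary~\ref{res-psibarinf} asserts nothing more than the commutativity of a diagram that is a gluing of two diagrams already established, together with the freshly stated definition $\psi_\infty^\# := \fr^\# \circ \xi^\#$. So the proof should be a short diagram chase rather than a new computation.

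First I would note that the lower-left triangle (involving $\taff$, $\spec\xalg$, $\spec\tralg$, and $\naff$) commutes by Proposition~\ref{res-trfact}, which gives exactly the equality $\omega^\# \circ \nu^\# = \fq^\# \circ \fr^\#$. Next, the outer path along the top and down the right, i.e.\ the triangle involving $\taff$, $\spec\xalg$, $\spec\tralg$, and $\spec\smring{\infty}$, commutes by the Baum factorization (Proposition~\ref{res-baumfact}), which gives $\phi_\infty^\# = \omega^\# \circ \nu^\# \circ \xi^\#$. Finally, the right-hand triangle with vertices $\naff$, $\spec\tralg$, $\spec\smring{\infty}$ commutes tautologically, because $\psi_\infty^\#$ was just defined as $\fr^\# \circ \xi^\#$.

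To verify the only remaining assertion, namely $\phi_\infty^\# = \fq^\# \circ \psi_\infty^\#$, I would simply chain the three identities:
\[
\phi_\infty^\# \;=\; \omega^\# \circ \nu^\# \circ \xi^\# \;=\; \fq^\# \circ \fr^\# \circ \xi^\# \;=\; \fq^\# \circ \psi_\infty^\#,
\]
using Proposition~\ref{res-baumfact}, Proposition~\ref{res-trfact}, and the definition of $\psi_\infty^\#$ respectively. Since every sub-triangle of the displayed diagram commutes, the whole diagram commutes.

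There is essentially no obstacle here; the corollary is a bookkeeping consequence of the two preceding propositions and a definition. The only thing one must be careful about is orientation of arrows: the diagram is drawn with maps of schemes, while the factorizations were proved for the pullback ring maps, so I would state the chase on the level of rings as above and then remark that passing to $\spec$ reverses arrows to give precisely the displayed spectral diagram.
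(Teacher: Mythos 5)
Your proof is correct and matches the paper's own argument: the corollary is obtained exactly by chaining the Baum factorization (\autoref{res-baumfact}), the factorization $\omega^\#\circ\nu^\#=\fq^\#\circ\fr^\#$ of \autoref{res-trfact}, and the definition $\psi_\infty^\#:=\fr^\#\circ\xi^\#$, which the paper displays immediately before stating the corollary. Your remark about working at the level of ring maps and then applying $\spec$ is also exactly how the paper handles the arrow orientations.
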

\begin{proof}[Proof of the \nr{res-sub}]\label{proof-sub} \ar{res-dir} and \ar{res-psibarinf} together imply that the following diagrams commute:
\begin{center}
\begin{tikzpicture}[node distance=3cm, auto, semithick, >=stealth']
  \node (Ceta) {$\naff$};
  \node (Co) [left of=Ceta, node distance=2.5cm] {$\taff$};
  \node (Cinf) [right of =Ceta, node distance=2.5cm] 
{$\spec\smring{\infty}$};
  \node (Cn) [right of=Cinf, node distance=2.5cm] {$\smaff{n}$};
  \draw[->] (Co) to node {$\fq$}	(Ceta);
  \draw[->] (Ceta) to node {$\psi_{\infty}$} 	(Cinf);
  \draw[->] (Cinf) to node {$\trunc_n$} 	(Cn);
  \draw[->, bend right] (Co) to node [swap] {$\phi_n$} 	(Cn);
  \node (Reta) [below of=Ceta, node distance=3cm]{$\nring$};
  \node (Ro) [left of=Reta, node distance=2.5cm] {$\tring$};
  \node (Rinf) [right of =Reta, node distance=2.5cm] 
{$\smring{\infty}$};
  \node (Rn) [right of=Rinf, node distance=2.5cm] {$\smring{n}$};
  \draw[<-] (Ro) to node {$\fq\us$}	(Reta);
  \draw[<-] (Reta) to node {$\psi_{\infty}\us$} 	(Rinf);
  \draw[<-] (Rinf) to node {$\trunc_n\us$} 	(Rn);
  \draw[<-, bend right] (Ro) to node [swap] {$\phi_n\us$} 	(Rn);
 \end{tikzpicture}
\end{center}
In particular, the map $\phi_n\us$ factors through $\nring$, as required. 
\end{proof}

\subsection{Proving the \nr{res-bir}}\label{sec-birproof}
Recall that \ar{res-dom} implies the first part of the \nr{res-bir}, by taking $$\fq : \taff \lra \naff$$ as the generically $2:1$ map.  Thus, it remains to show that the maps 
$$\psi_n: \naff \lra \bhmv{n}$$ 
have birational inverses $\rho_n$.  The inverse map $\rho_3$ was already exhibited in \ar{res-rho3}, and we obtain $\rho_n$ by marginalization: let
$$\rho_n = \rho_3 \circ \mu^n_3.$$
Let $U\sbs\naff$ be the Zariski open set on which $\psi_3$ is an isomorphism with inverse $\rho_3$.  Consider the set $\psi_n(U) \sbs \bhmv{n}$.  It is Zariski dense in $\bhmv{n}$, and by Chevalley's theorem (\citetalias{EGA4}, 1.8.4), it is constructible, so it must contain a dense open set $W'\sbs\bhmv{n}$.  Now let $W=\psi_n\-(W')$, so we have $\psi_n(W)=W'\sbs \psi_n(U)$.  
\begin{prop}$\rho_n \circ \psi_n = \te{Id}$ on $W$ and $\psi_n \circ \rho_n = \te{Id}$ on $W'$.
\end{prop}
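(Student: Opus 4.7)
The plan is to reduce both identities to the already-established birational inverse relation between $\psi_3$ and $\rho_3$ on $U$ (from \ar{res-rho3}), via a single key identity $\mu^n_3 \circ \psi_n = \psi_3$. I would first establish this identity: by \ar{res-marg}, $\mu^n_3 \circ \phi_n = \phi_3$; substituting the factorizations $\phi_n = \psi_n \circ \fq$ and $\phi_3 = \psi_3 \circ \fq$ from \ar{res-dom} yields $\mu^n_3 \circ \psi_n \circ \fq = \psi_3 \circ \fq$, and since $\fq$ is dominant (so $\fq^\#$ is injective on coordinate rings), we can cancel $\fq$ on the right to conclude $\mu^n_3 \circ \psi_n = \psi_3$.

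For $\psi_n \circ \rho_n = \te{Id}$ on $W'$: given $w' \in W' \sbs \psi_n(U)$, choose any $u \in U$ with $\psi_n(u) = w'$, and compute
$$\rho_n(w') \;=\; \rho_3\bigl(\mu^n_3(\psi_n(u))\bigr) \;=\; \rho_3(\psi_3(u)) \;=\; u,$$
using the key identity together with $\rho_3 \circ \psi_3 = \te{Id}$ on $U$. Applying $\psi_n$ then gives $\psi_n(\rho_n(w')) = \psi_n(u) = w'$.

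For $\rho_n \circ \psi_n = \te{Id}$ on $W$: for any $w \in W$, the key identity immediately gives $\rho_n(\psi_n(w)) = \rho_3(\psi_3(w))$, so it suffices to show $w \in U$. Here I would use that $\psi_3 = \mu^n_3 \circ \psi_n$ is generically injective (being birationally invertible by \ar{res-rho3}), which forces $\psi_n$ to be generically injective as well. After possibly shrinking $W$ and $W'$ to lie within the injective locus of $\psi_n$, for $w \in W$ we have $\psi_n(w) \in W' \sbs \psi_n(U)$, so $\psi_n(w) = \psi_n(u)$ for some $u \in U$; injectivity then forces $w = u \in U$, and hence $\rho_3(\psi_3(w)) = w$.

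The main obstacle is precisely this last injectivity step: the raw definition $W = \psi_n^{-1}(W')$ does not automatically give $W \sbs U$, because $\psi_n$ could a priori have extra preimages of points of $\psi_n(U)$ lying outside $U$. The resolution is the observation that generic injectivity of $\psi_n$, inherited from the birationality of $\psi_3$, confines any such extra preimages to a proper Zariski closed subset, which can be removed by a harmless further shrinking of the dense open sets $W$ and $W'$.
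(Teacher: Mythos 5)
Your proposal is correct and takes essentially the same route as the paper: the paper likewise defines $\rho_n=\rho_3\circ\mu^n_3$, uses the identity $\mu^n_3\circ\psi_n=\psi_3$ (which it treats as immediate from marginalization, while you justify it explicitly by cancelling the dominant map $\fq$ against $\mu^n_3\circ\phi_n=\phi_3$), and then applies $\rho_3\circ\psi_3=\te{Id}$ on $U$. The only difference is that the paper simply asserts that a point of $W=\psi_n^{-1}(W')$ lies in $U$, whereas you observe this is not automatic and repair it by shrinking $W'$ into the locus where fibers of $\psi_n$ are singletons (they are contained in fibers of $\psi_3$, which are generically singletons), a harmless refinement that tightens a step the paper passes over silently.
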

\begin{proof}
Suppose $\wh\eta \in W$.  Then $\rho_n \circ \psi_n (\wh \eta)
=\rho_3 \circ \mu^n_3 \circ \psi_n(\wh\eta) 
=\rho_3 \circ \psi_3(\wh\eta)
= \wh \eta$ since $\wh\eta\in U$.  
Now suppose $\wh p \in W'$, so $\wh p = \psi_n(\wh\eta)$ 
for some $\wh \eta \ \in W$.  Then, applying \ar{res-marg},
\begin{align*}
\psi_n \circ \rho_n (\wh p) 
&= \psi_n \circ \rho_n \circ \psi_n(\wh\eta) = \psi_n \circ \rho_3 \circ \mu^n_3 \circ \psi_n(\wh\eta)\\ 
&= \psi_n \circ \rho_3 \circ \psi_3(\wh\eta)= \psi_n (\wh\eta) = \wh p \qedhere\\ 
\end{align*}
\end{proof}
This completes the proof of the \nr{res-bir}.  In fact we have also proven the following:

\begin{thm}\label{res-tri}For any $n'\geq n\geq 3$, there is a commutative diagram of dominant maps: 
\end{thm}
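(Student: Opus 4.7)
The plan is to assemble the expected diagram entirely from pieces already established, treating \ar{res-tri} as a bookkeeping consolidation of \ar{res-marg}, the \nr{res-sub}, \ar{res-dom}, \ar{res-factor}, and the construction $\rho_n = \rho_3\circ\mu^n_3$ used in the proof of the \nr{res-bir}. I expect the diagram to have $\taff$ on the left mapping by $\fq$ to $\naff$, then by $\psi_{n'}$ and $\psi_n$ to $\bhmv{n'}$ and $\bhmv{n}$ respectively, with the marginalization $\mu^{n'}_n$ connecting $\bhmv{n'}$ to $\bhmv{n}$, and the birational inverses $\rho_{n'}, \rho_n$ completing the picture. The ``dominant'' qualifier refers to the four solid morphisms $\fq, \psi_n, \psi_{n'}, \mu^{n'}_n$.

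First I would verify the $\psi$-compatibility $\mu^{n'}_n \circ \psi_{n'} = \psi_n$ as morphisms on $\naff$. Applying $\phi_n = \psi_n\circ\fq$ (the factorization from the \nr{res-sub} and \ar{res-dom}) together with the marginalization identity $\mu^{n'}_n \circ \phi_{n'} = \phi_n$ from \ar{res-marg} gives
\[
\mu^{n'}_n \circ \psi_{n'} \circ \fq \;=\; \mu^{n'}_n \circ \phi_{n'} \;=\; \phi_n \;=\; \psi_n \circ \fq.
\]
Since $\fq$ is dominant, this equality of ring maps after precomposing with $\fq\us$ descends to $\mu^{n'}_n \circ \psi_{n'} = \psi_n$ on all of $\naff$. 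The dual $\rho$-compatibility $\rho_n \circ \mu^{n'}_n = \rho_{n'}$ then follows by the transitivity of marginalization, $\mu^{n'}_3 = \mu^n_3 \circ \mu^{n'}_n$, which is immediate from ${\mu^{n'}_j}\us(m_I) = m_I$ in subset-index notation; indeed
\[
\rho_n \circ \mu^{n'}_n \;=\; \rho_3 \circ \mu^n_3 \circ \mu^{n'}_n \;=\; \rho_3 \circ \mu^{n'}_3 \;=\; \rho_{n'}.
\]

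For dominance: $\fq$ is dominant by \ar{res-dom}; $\psi_{n'}$ and $\psi_n$ are dominant onto $\bhmv{n'}$ and $\bhmv{n}$ respectively by \ar{res-factor}; and $\mu^{n'}_n : \bhmv{n'} \dra \bhmv{n}$ is dominant because its composition with $\psi_{n'}$ equals $\psi_n$, whose image is Zariski dense in $\bhmv{n}$. To state the diagram globally on the projective varieties I would restrict to a common dense open set where $\psi_{n'}, \psi_n$ are isomorphisms onto their images and where $\mu^{n'}_n$, $\rho_n$, $\rho_{n'}$ are defined; this is the standard open $W' \subseteq \bhmv{n'}$ produced in the preceding proof via Chevalley's theorem, intersected with its analogue for $n$.

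The hard part will not be any new computation but rather the careful recording of the diagram and the verification that each claimed commutativity is legitimate given that some arrows are only rational. Once the identity $\mu^{n'}_n \circ \psi_{n'} = \psi_n$ is established as a genuine equality of morphisms on $\naff$, the remaining commutativities are formal consequences of the previously proven identities, and the dominance claims reduce to the fact that the image of a dominant map remains dense under further dominant maps.
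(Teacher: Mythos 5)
Your proposal is correct and matches the paper's (implicit) argument: the theorem is stated as having been ``also proven'' by the preceding work, namely the factorization $\phi_n=\psi_n\circ\fq$ from the Parameter Subring Lemma and \ar{res-dom}, the marginalization compatibility of \ar{res-marg}, and the dominance statements of \ar{res-dom} and \ar{res-factor}, with the key identity $\mu^{n'}_n\circ\psi_{n'}=\psi_n$ obtained exactly as you do by cancelling the dominant map $\fq$ (equivalently, the injectivity of the subring inclusion $\fq\us$). Your extra verification of $\rho_n\circ\mu^{n'}_n=\rho_{n'}$ is harmless but not needed, since the paper's diagram contains no $\rho$ arrows.
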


\begin{center}
\begin{tikzpicture}[node distance=2cm, auto, semithick, >=stealth']
  \node (Cn) {$\CC_\eta^5$};
  \node (X) [right of=Cn, node distance=3cm]{};
  \node (Vn) [below of=X, node distance=1cm] {$\bhmv{n}$};
  \node (Vn1) [above of=X, node distance=1cm] {$\bhmv{n'}$};
  \draw[->] (Cn) to node {$\psi_{n'}$} 	(Vn1);
  \draw[->] (Cn) to node [swap] {$\psi_n$}	(Vn);
  \draw[->, dashed] (Vn1) to node {$\mu^{n'}_n$}	(Vn);
  \node (Co) [left of=Cn] {$\taff$};
  \draw[->, bend right] (Co) to node [swap] {$\phi_n$}	(Vn);
  \draw[->, bend left] (Co) to node {$\phi_{n'}$}	(Vn1);
  \draw[->] (Co) to node {$\fq$}		(Cn);
\end{tikzpicture}
\end{center}

\section{Applications and future directions}\label{sec-app}

Besides attempting to compute a set of generators for $\bhmi{5}$, there are many other questions to be answered about HMMs that can be approached immediately with the techniques of this paper.

\subsection{A nonnegative distribution in $\bhmv{3}$ but not $\bhmm{3}$}

It turns out that not all of the probability distributions (non-negative real points) of $\bhmv{n}$ lie in the model $\bhmm{n}$.  In other words, $\bhmv{n}\cap \psim{n} \neq \bhmm{n}$, so the model must be cut out by some non-trivial inequalities inside the simplex.  To illustrate this, the following real point $\wh \theta$ of $\taff$ does not lie in $\tsim$, but maps under $\phi_3$ to a point $\wh{p}$ of $\Delta_p^{7}$:

\begin{equation}\label{eqn-neg}
\wh\theta = (\wh\pi, \wh T, \wh E) = \left(\bmat{-\tfrac{1}{8} & \tfrac{9}{8}}, \quad 
\bmat{
\tfrac{3}{4} & \tfrac{1}{4}  \vspace{1ex}\\ 
\tfrac{1}{4} & \tfrac{3}{4}}, \quad 
\bmat{
\tfrac{3}{4} & \tfrac{1}{4}  \vspace{1ex}\\ 
\tfrac{1}{4} & \tfrac{3}{4}}\right)
\end{equation}

Moreover, the analysis of \ar{sec-intbir} reveals that the fiber $\phi_3\-(\wh p)$ consists only of the point $\wh \theta$ and the ``swapped'' point
\begin{equation}\label{eqn-negswap}
\wh\theta' = (\wh\pi', \wh T', \wh E') = \left(\bmat{\tfrac{9}{8} & -\tfrac{1}{8}}, \quad 
\bmat{
\tfrac{3}{4} & \tfrac{1}{4}  \vspace{1ex}\\ 
\tfrac{1}{4} & \tfrac{3}{4}}, \quad 
\bmat{
\tfrac{1}{4} & \tfrac{3}{4}  \vspace{1ex}\\ 
\tfrac{3}{4} & \tfrac{1}{4}}\right)
\end{equation}
which is also not in $\tsim$.  Hence the image point $\wh p=\phi_3(\wh\theta)=\phi_3(\wh\theta')$ is a non-negative point of $\bhmv{3}$ that does not lie in $\bhmm{3}$.

\subsection{A semialgebraic model membership test}
In light of the fact that not every nonnegative distribution in $\bhmv{n}$ is in $\bhmm{n}$, the defining equations of $\bhmv{n}$ are not sufficient to test a probability distribution for membership to the model.  
Using the method of \ar{sec-hmms}, membership to $\hmm{2,k,n}$ can be tested by reducing to the $k=2$ to recover the parameters.

So, suppose we are given a distribution $p\in \psim{n}$ and asked to determine whether $p\in \bhmm{n}$.  The following procedure yields either 
\begin{itemize}

\item[(1)] a proof by contradiction that $p \notin \bhmm{n}$,

\item[(2)] a parameter vector $\theta\in \tsim$ such that $\phi_n(\theta) = p\in\bhmm{n}$, or

\item[(3)] a reduction of the question to whether $p$ lies in one of the lower-dimensional submodels of $\bhmm{n}$ discussed in \ar{sec-intbir}.
\end{itemize}
How to proceed from (3) is essentially the same as what follows, using the birational parametrizations of the respective submodels given in \ar{sec-intbir}.

To begin, we let $p'=\mu^n_3(p)\in\psim{3}$, i.e. we marginalize $p$ to the distribution $p'$ it induces on the first three visible nodes.  Note that if $p\in\bhmm{n}$ then $p'\in\bhmm{3}$.  Observing the moments $m_I$ of $p'$, if any denominators in the formulae of \ar{res-rho3} vanish, then we end in case (3).  

Otherwise, we let $(a,b,c,u,v)=\psi_3\-(p')$, choose $v_0$ to be either square root of $v$, and let $a_0=a/v_0$, $c_0=c/v_0$.  
If $p$ were due to some BHM process, then by \ar{res-tri}, these would be its parameters, up to a simultaneous sign change of $(a_0,b_0,v_0)$.  
With this in mind, we define $\theta=(\pi,T,E)$ using \eqref{eqn-pite}.  
If $(\pi,T,E)$ are not non-negative stochastic matrices, then $p\notin\bhmm{n}$ and we end in case (1).  
If they are, we compute $p''=\phi_n(\theta)$, and if $p=p''$ then we end in case (2).  
Otherwise $p$ must not have been in $\bhmm{n}$, so we end in case (1).

Note that since all the criteria in this test are algebraic equalities and inequalities, this procedure implicitly describes a semialgebraic characterization of $\bhmm{n}$ for all $n\geq 3$.

\subsection{Identifiability of parameters}
By a {\em rational map} on a possibly non-algebraic subset $\Theta\sbs \CC^k$, we mean any rational map on the Zariski closure of $\Theta$, which will necessarily be defined as a function on a Zariski dense open subset of $\Theta$.  We define polynomial maps on $\Theta$ similarly.

Let $\phi: \Theta \to \CC^n$ be an algebraic statistical model, where as usual we assume $\Theta\sbs\CC^k$ is Zariski dense, and therefore Zariski irreducible.  A (rational) parameter of the model is any rational map $s: \Theta \to \CC$.  Such parameters form a field, $K\simeq\ffield(\CC^k)$.  In applications such as \citep*{MED09}, it is important to know to what extent a parameter can be identified from observational data alone.  In other words, given $\phi(\theta)$, what can we say about $s(\theta)$?  This leads to several different notions of parameter identifiability, as discussed by \citet*{GSS10}.

\begin{defn}We say that a rational parameter $s\in K$ is 
\begin{itemize}
\item {\em (set-theoretically) identifiable} if $s=\sigma \circ \phi$ for some set-theoretic function $\sigma:\phi(\Theta) \to \CC$.  In other words, for all $\theta,\theta'\in\Theta$, if $\phi(\theta)=\phi(\theta')$ then $s(\theta)=s(\theta')$.

\item {\em rationally identifiable} if $s=\sigma \circ \phi$ for some {\em rational} map $\sigma:\phi(\Theta) \to \CC$ (this notion is used without a name by \citet{GSS10}).

\item {\em generically identifiable} if there is a (relatively) Zariski dense open subset $U\sbs\Theta$ such that $s|_U=\sigma \circ \phi|_U$ for some set-theoretic function $\sigma:\phi(U) \to \CC$.

\item {\em algebraically identifiable} if there is a polynomial function $g(p,q):=\sum_i g_i(p_1,\ldots,p_n)q^i$ on $\phi(\Theta) \times \CC$ of degree $d > 0$ in $q$ (so that $g_d$ is not identically $0$ on $\phi(\Theta)$) such that $g(\phi(\theta),s(\theta))=0$ for all $\theta\in\Theta$ (and hence all $\theta \in \CC^k$).  
\end{itemize}
\end{defn}

\begin{que}\label{que-idcom}
What combinations of BHM parameters are rationally identifiable, generically identifiable, or algebraically identifiable?
\end{que}

To answer this question we introduce a lemma on algebraic statistical models in general:

\begin{lem} For any algebraic statistical model $\phi$ as above, the sets $K_{ri}$, $K_{gi}$, and $K_{ai}$, of rationally, generically, and algebraically identifiable parameters, respectively, are all fields.
\end{lem}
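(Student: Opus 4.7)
Set $L := \{\sigma \circ \phi \mid \sigma \in \CC(\ol{\phi(\Theta)})\} \sbs K$. Because $\phi : \Theta \to \ol{\phi(\Theta)}$ is dominant, the pullback $\sigma \mapsto \sigma \circ \phi$ is an injective ring homomorphism $\CC(\ol{\phi(\Theta)}) \hookrightarrow K$, so $L$ is a subfield of $K$ isomorphic to $\CC(\ol{\phi(\Theta)})$. Unpacking the definition of rational identifiability (and the convention that a rational map on $\phi(\Theta)$ means a rational map on its Zariski closure), $s \in K_{ri}$ iff $s \in L$. Hence $K_{ri} = L$ is a subfield.

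For $K_{gi}$, I plan to verify closure under the field operations directly, by intersecting witnessing dense opens. If $s, t \in K_{gi}$ are witnessed by $(U_s, \sigma_s)$ and $(U_t, \sigma_t)$ respectively, then on $U := U_s \cap U_t$ — still Zariski dense because $\Theta$ is Zariski irreducible — both $s$ and $t$ factor set-theoretically through $\phi$, so $s+t$ and $st$ do as well, with witnessing set-theoretic functions $\sigma_s + \sigma_t$ and $\sigma_s \sigma_t$ on $\phi(U)$. For $s \neq 0$ in $K$, the zero locus $V(s)$ is a proper Zariski-closed subvariety of $\Theta$, so $U \setminus V(s)$ is still Zariski dense and open; on it, $1/s$ factors through $\phi$ via $1/\sigma_s$. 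The constants $0, 1$ lie in $K_{gi}$ trivially, so $K_{gi}$ is a subfield.

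For $K_{ai}$, I plan to identify it with the relative algebraic closure $\widetilde{L}$ of $L$ inside $K$, which is a subfield by the standard field-theoretic fact that algebraic elements over a subfield form a subfield. Given $s \in K_{ai}$ with witnessing polynomial $g(p, q) = \sum_{i=0}^d g_i(p) q^i$ of positive degree $d$ in $q$, the hypothesis that $g_d$ does not vanish identically on $\phi(\Theta)$ translates to $g_d \circ \phi$ being a nonzero element of $L$; dividing the relation $g(\phi(\theta), s(\theta)) = 0$ through by $g_d \circ \phi$ then produces a monic polynomial equation for $s$ with coefficients in $L$, so $s \in \widetilde{L}$. Conversely, any monic algebraic relation for $s$ over $L$ can be cleared of denominators to yield a polynomial $g$ of the required form, so $K_{ai} = \widetilde{L}$. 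The main obstacle is nothing deep: it is simply checking that the polynomial-over-$\phi(\Theta)$ formulation really matches the abstract algebraic-over-$L$ notion, for which the non-identical-vanishing hypothesis on $g_d$ is essential, since that is exactly what makes $g_d \circ \phi$ invertible in $K$ and allows passage to a monic relation.
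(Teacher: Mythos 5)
Your proposal is correct and takes essentially the same route as the paper: $K_{ri}$ is identified with the (isomorphic) image of the function field of $\overline{\phi(\Theta)}$ under pullback along $\phi$, $K_{gi}$ is shown closed under the field operations by intersecting witnessing dense opens and excising the zero locus to invert, and $K_{ai}$ is identified with the relative algebraic closure of that image field inside $K$. The only difference is that you spell out the equivalence between the ``$g_d$ not identically zero on $\phi(\Theta)$'' formulation and algebraicity over $L$ (via division by $g_d\circ\phi$ and clearing denominators), a step the paper treats as immediate.
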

\begin{proof}
Since $\Theta$ is Zariski irreducible, so is $\phi(\Theta)$.  Hence the set of rational maps on $\phi(\Theta)$ is simply the fraction field of its Zariski closure (an irreducible variety), and $K_{ri}$ is the image of this field under $\phi^\#$, which must be a field.

For $K_{gi}$, the crux is to show that if $s,s'\in K_{gi}$ and $s\neq 0$ then $s'/s \in K_{gi}$.  
Let $U\sbs\Theta$ and $\sigma:\phi(U) \to \CC$ be as in the definition for $s$, and likewise $U'\sbs \Theta$ and $\sigma:\phi(U') \to \CC$ for $s'$.  
Let $U''=\{\theta\in U\cap U' \; | \; s(\theta) \neq 0\}$, which, being an intersection of three Zariski dense open subsets of $\Theta$, is a dense open.  
We have $\sigma\neq 0$ on $\phi(U'')\sbs \phi(U)\cap \phi(U')$, so we can let $\sigma''=\sigma'/\sigma : \phi(U'') \to \CC$, and then $\sigma'' \circ \phi = s'/s$, so $s'/s\in K_{gi}$. 
Thus $K_{gi}$ is stable under division, and simpler arguments show it is stable stable under $+,-$, and $\cdot$, so it is a field.

Finally, $K_{ai}$ is expressly the relative algebraic closure in $K$ of the image under $\phi^\#$ of the coordinate ring of $\phi(\Theta)$, which is therefore a field.
\end{proof}

\begin{prop}For any algebraic statistical model $\phi$ as above, $K_{ri}\sbs K_{gi} \sbs K_{ai} \sbs K$.
\end{prop}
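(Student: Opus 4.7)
The plan is to verify the three inclusions in turn. The inclusion $K_{ai}\subseteq K$ is immediate, since algebraic identifiability is a property of elements of $K$ by definition. The substance lies in $K_{ri}\subseteq K_{gi}$ and $K_{gi}\subseteq K_{ai}$.

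For $K_{ri}\subseteq K_{gi}$, I would take $s=\sigma\circ\phi$ with $\sigma$ a rational map on $\overline{\phi(\Theta)}$ defined on a dense open $V\subseteq\overline{\phi(\Theta)}$, and set $U=\phi^{-1}(V)\cap\mathrm{dom}(s)$. Because $\phi(\Theta)$ is Zariski dense in its closure, $V$ meets $\phi(\Theta)$, so $U$ is nonempty; since $\Theta$ is irreducible every nonempty open is dense, so $U$ is dense. Then $s|_U=\sigma\circ\phi|_U$ holds in the set-theoretic sense, which is exactly the definition of generic identifiability.

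For $K_{gi}\subseteq K_{ai}$, the plan is to produce an algebraic relation between $s$ and the components of $\phi$ via a fiber-dimension count. Given a witness $(U,\sigma)$ for $s\in K_{gi}$, I would consider the morphism
\[
(\phi,s)\colon U\cap\mathrm{dom}(s)\;\longrightarrow\;\overline{\phi(\Theta)}\times\CC.
\]
Its image $I$ is constructible by Chevalley, and the identity $s=\sigma\circ\phi$ on $U$ forces the fibers of $(\phi,s)$ to coincide with those of $\phi|_{U\cap\mathrm{dom}(s)}$, because any two points with the same $\phi$-value must have the same $s$-value. The fiber-dimension theorem then gives
\[
\dim\overline{I}\;=\;\dim\Theta-\bigl(\dim\Theta-\dim\overline{\phi(\Theta)}\bigr)\;=\;\dim\overline{\phi(\Theta)},
\]
so $\overline{I}$ is a codimension-one subvariety of the irreducible $\overline{\phi(\Theta)}\times\CC$. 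Any $g\in\CC[p,q]$ vanishing on $\overline{I}$ but not on $\overline{\phi(\Theta)}\times\CC$ must, after discarding $q$-coefficients that lie in $I(\phi(\Theta))$, have positive $q$-degree with leading coefficient nonzero on $\phi(\Theta)$. Since $(\phi(\theta),s(\theta))\in\overline{I}$ for all $\theta\in\mathrm{dom}(s)$, the relation $g(\phi(\theta),s(\theta))=0$ holds on a dense open of $\Theta$ and hence everywhere.

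The main obstacle is the dimension count. Naively one might try to close up the graph of the set-theoretic $\sigma$, but $\sigma$ is only a function of sets and its abstract graph can a priori be Zariski dense in $\overline{\phi(\Theta)}\times\CC$. The point that makes the argument work is that one must close not the graph of $\sigma$ but rather the image of the morphism $(\phi,s)$ on the algebraic open $U\cap\mathrm{dom}(s)$; this promotes set-theoretic fiber-constancy into an honest statement about fiber dimensions of a morphism of varieties, making Chevalley and the fiber-dimension theorem applicable. Once the bound $\dim\overline{I}\leq\dim\overline{\phi(\Theta)}$ is in hand, extracting a polynomial $g$ of the required shape is a routine ideal-theoretic step.
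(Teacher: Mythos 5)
Your first and third inclusions are fine, and for what it's worth the paper does not reprove this statement at all: its ``proof'' is a one-line citation of Proposition 3 of GSS10. The genuine gap is in your middle step $K_{gi}\subseteq K_{ai}$. The set $U\cap\mathrm{dom}(s)$ on which you run the argument is \emph{not} an algebraic open set: $U$ is only relatively open in $\Theta$, and $\Theta$ is merely a Zariski-dense subset of $\CC^k$ (in the intended application, a real cube of stochastic parameters), so neither Chevalley's theorem nor the fiber-dimension theorem applies to $(\phi,s)$ restricted to it --- contrary to what your closing paragraph asserts (also ``$\dim\Theta$'' is not defined; you mean $k$). If you instead pass to the honest open subvariety $W=U'\cap\mathrm{dom}(s)\subseteq\CC^k$, where $U=\Theta\cap U'$, which is what $\overline{I}$ and the fiber-dimension theorem actually see, then the fibers of $(\phi,s)|_W$ need \emph{not} coincide with those of $\phi|_W$: generic identifiability only makes $s$ constant on $\phi^{-1}(p)\cap U$, a subset of $\Theta$ that can be Zariski-thin (or empty) inside the complex fiber $\phi^{-1}(p)$. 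Note moreover that the containment ``fibers of $(\phi,s)$ $\subseteq$ fibers of $\phi$'' only gives $\dim\overline{I}\geq\dim\overline{\phi(\Theta)}$; the inequality you actually need, $\dim\overline{I}\leq\dim\overline{\phi(\Theta)}$, amounts to $s$ being constant on generic \emph{complex} fibers of $\phi$, which is precisely what the hypothesis does not supply.

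This is not a patchable technicality at the stated level of generality. With only ``$\Theta\subseteq\CC^k$ Zariski dense'' as in the paper's standing assumption, the inclusion $K_{gi}\subseteq K_{ai}$ can fail outright: take $\Theta=\{(x,e^x):x\in\mathbb{R}\}\subset\CC^2$, which is Zariski dense, $\phi(x,y)=x$ and $s=y$. Then $s$ is generically identifiable (take $U=\Theta$ and the set-theoretic function $\sigma(x)=e^x$), but any polynomial $g(p,q)=\sum_i g_i(p)q^i$ with $g(\phi(\theta),s(\theta))=0$ on $\Theta$ is a polynomial in $(x,y)$ vanishing on a Zariski-dense subset of $\CC^2$, hence identically zero, so no coefficient $g_d$ can survive and $s\notin K_{ai}$. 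So any correct proof must use more about $\Theta$ than Zariski density (some guarantee that $\phi^{-1}(p)\cap\Theta$ is large enough in the complex fiber for generic $p$, as in the semialgebraic setting of GSS10); that extra input is exactly what the paper imports wholesale by citing GSS10 rather than rederiving the statement, and it is what your dimension count silently assumes.
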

\begin{proof}
This is now just a restatement of Proposition 3 in \citep{GSS10}.
\end{proof}

Now, the answer to our identifiability question for BHM parameters can be given easily in the coordinates of \ar{sec-bir}.  Here $\phi$ is the BHM map $\phi_n$.  The field $K_{ri}$ is simply the image $\fq^\#(\ffield(\naff))$ because by \ar{res-bir}, $$\psi^\# : \ffield(\bhmv{n})\to \ffield(\naff)$$ is an isomorphism.  
Hence the rationally identifiable parameters are precisely the field of rational functions in $(a,b,c,u,v)=(a_0v_0,b,c_0v_0,u,v_0^2)$ (see \eqref{eqn-pite} for the meanings of these parameters).  
Since $K$ is a quadratic field extension of $K_{ri}$ given by adjoining $v_0 = \sqrt{v}$, and $K_{ai}$ is the algebraic closure of $K_{ri}$ in $K$ (almost by definition), it follows that $K_{ai}=K$, i.e. {\em all parameters} are algebraically identifiable.  
Finally, we observe that, by the action of $\sw$ in \ar{sec-linpar}, there are generically two possible values of $v_0=\frac{1}{2}(E_{11}-E_{01})$ for a given observed distribution, namely $\pm \sqrt{v}$.  
Hence $v_0\notin K_{gi}$, and since a quadratic field extension has no intermediate extensions, it follows that $K_{ri}=K_{gi}$, i.e. all generically identifiable parameters are in fact rationally identifiable.  In summary,
\begin{prop}For $\bhmm{n}$ where $n\geq 3$,
$$\CC(a,b,c,u,v) = K_{ri} = K_{gi} \subsetneq K_{ai} = \CC(a_0,b,c_0,u,v_0)$$
\end{prop}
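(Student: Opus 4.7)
The plan is to use the \nr{res-bir} and basic field theory to identify each of the three fields explicitly. I would first compute $K_{ri}$: by \ar{res-bir}, $\psi_n^\#$ is an isomorphism $\ffield(\bhmv{n}) \xrightarrow{\sim} \ffield(\naff) = \CC(a,b,c,u,v)$, so a rational parameter $s \in K$ is rationally identifiable precisely when $s \in \phi_n^\#(\ffield(\bhmv{n})) = \fq^\#\bigl(\psi_n^\#(\ffield(\bhmv{n}))\bigr) = \fq^\#(\ffield(\naff))$. Tracing $\fq^\#$ through the definitions in \ar{res-sub}, this image is $\CC(a_0v_0,\,b,\,c_0v_0,\,u,\,v_0^2) = \CC(a,b,c,u,v)$ as a subfield of $K = \CC(a_0,b,c_0,u,v_0)$, giving $K_{ri} = \CC(a,b,c,u,v)$.

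Next I would identify $K_{ai}$. Since $v_0^2 = v \in K_{ri}$ while $a_0 = a/v_0$ and $c_0 = c/v_0$ each lie in $K_{ri}(v_0)$, we have $K = K_{ri}(v_0)$. This extension is genuinely quadratic: $v_0 \notin K_{ri}$ because $\CC(a,b,c,u,v) = \CC(a_0v_0,b,c_0v_0,u,v_0^2)$ is contained in the $\sw$-invariant subring, which is proper in $\tring$. Hence $K/K_{ri}$ is a finite algebraic extension, and since $K_{ai}$ is by definition the algebraic closure of $K_{ri}$ inside $K$, we obtain $K_{ai} = K = \CC(a_0,b,c_0,u,v_0)$.

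For $K_{gi}$ I would use the hidden-label-swap action $\sw$ of \autoref{sec-linpar}, which satisfies $\phi_n \circ \sw = \phi_n$ and sends $v_0 \mapsto -v_0$. Its fixed locus is cut out by polynomial equations and so is a proper closed subset of $\taff$; consequently on every Zariski dense open $U \subseteq \taff$ there exist points $\theta$ with $\sw(\theta) \in U$ and $v_0(\theta) \neq -v_0(\theta) = v_0(\sw(\theta))$, while $\phi_n(\theta) = \phi_n(\sw(\theta))$. Therefore no set-theoretic function of $\phi_n(\theta)$ can agree with $v_0(\theta)$ on any dense open, i.e.\ $v_0 \notin K_{gi}$. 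Combining this with the inclusions $K_{ri} \subseteq K_{gi} \subseteq K_{ai} = K_{ri}(v_0)$ and the fact that a quadratic field extension admits no intermediate field forces $K_{gi} = K_{ri}$.

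No step here is really the main obstacle: once \ar{res-bir} is in hand the identifications $K_{ri} = \CC(a,b,c,u,v)$ and $K_{ai} = K$ are essentially formal. The only subtle point is the $K_{gi}$ argument, and even there the only real content is that the generic fiber of $\phi_n$ meets both sheets of the $\sw$-action --- which is immediate since the fixed locus of $\sw$ is of positive codimension in $\taff$. All other relations in the claimed chain then follow from elementary field theory.
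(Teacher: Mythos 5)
Your proposal is correct and takes essentially the same route as the paper: identify $K_{ri}=\fq^\#(\ffield(\naff))=\CC(a,b,c,u,v)$ via the Birational Parameter Theorem, note that $K=K_{ri}(v_0)$ with $v_0^2=v$ is a quadratic extension so $K_{ai}=K$, and use the label-swapping involution $\sw$ (two generic preimages differing in the sign of $v_0$) together with the absence of intermediate fields in a quadratic extension to conclude $K_{gi}=K_{ri}$. Your write-up merely makes explicit two points the paper leaves implicit (that $v_0\notin K_{ri}$ because $K_{ri}$ is $\sw$-invariant, and the dense-open bookkeeping for $K_{gi}$), which is a welcome but not substantively different elaboration.
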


\subsection{A new grading on BHMM invariants}
The re-parametrized model map $\psi_n$ is homogeneous in cumulant and moment coordinates, with respect to a $\ZZ$-grading where $deg(m_v)=deg(k_v)=sum(v)$, $deg(b)=0$, $deg(a)=deg(c)=deg(u)=1$, and $deg(v)=2$.  This grading allows for fast linear algebra techniques that solve for low degree model invariants as in \citep{BM05}, except that this grading is intrinsic to the model.  Bray and Morton's grading, which is in {\em probability} coordinates, is not on the binary HMM proper, but on a larger variety obtained by relaxing the parameter constraints that the transition and emission matrix row sums are $1$.  The invariants obtained in their search are hence invariants of this larger variety, and exclude some invariants of $\bhmm{n}$.  The grading presented here can thus be used to complete their search for invariants up to any finite degree.

\subsection{Equilibrium BHM processes}

In \ar{sec-intbir} we found that if a BHM process is at equilibrium, our formula for $\psi_3\-$ is undefined.  We may define Equilibrium Binary Hidden Markov Models, EBHMMs, by restricting $\phi_n$ to the locus $\{a_0b-a_0+c_0=0\}$, which turns out to yield a four-dimensional submodel of $\bhmm{n}$ for each $n\geq 3$.  The same techniques used here to study BHMMs have revealed that the EBHMMs, too, have birational parametrizations, and the ideal of $\ebhmm{3}$ is generated by the equations $m_1=m_2=m_3$ and $m_{12}=m_{13}$.  The geometry of EBHMMs will need to be considered explicitly in future work to identify the learning coefficients of BHMM fibers.

\subsection{Larger hidden Markov models}
As we have remarked throughout, many results on $\bhmm{n}$ can be readily applied to $\hmm{2,k,n}$, i.e. HMMs with two hidden states and $k$ visible states $\alpha_1,\ldots ,\alpha_k$.  For example, consider the parameter identification problem.  We may specify the process by a $2\times k$ matrix $E$ of emission probabilities, along with a triple $(a_0,b,c_0)$ defining the $\pi$ and $T$ of the two-state hidden Markov chain as in \eqref{eqn-pite}.  As in \ar{sec-hmms}, to obtain $E_{0\ell}$ and $E_{1\ell}$ from the observed probability distribution for any fixed $j$, we simply define a BHM process by letting $\alpha_\ell=1$ and $\alpha_j=0$ for $j\neq \ell$.  Applying \ar{res-rec3} to the moments of the distribution yields values for $(a,b,c,u,v)$ provided the genericity condition that the denominators involved do not vanish.  Letting $v_0=\sqrt{v}$, $a_0=a/v_0$, and $c_0=c/v_0$, we obtain $(a_0,b,c_0,u,v_0)$ up to a simultaneous sign change on $(a_0,c_0,v_0)$ corresponding to swapping the hidden alphabet as in \ar{sec-linpar}.  Then $E_{0\ell} = u-v$ and $E_{1\ell}=u+v$, and we get $\pi,T$ as well from $(a_0,b,c_0)$.  We can repeat this for each $\ell=1,\ldots,k$ to obtain all the emission parameters, and hence identify all the process parameters modulo the swapping operation.  

For each $\ell \in \{1,\ldots,k\}$, we can also obtain many polynomial invariants of $\hmm{2,n,k}$ by reducing to $\bhmm{n}$ as above, and marginalizing to collections of 4 equally spaced visible nodes to obtain points of $\bhmm{4}$ at which we know the invariants of \ar{res-bhmi4} will vanish.

Given these extensions, one can hope that techniques similar to those used here could elucidate the algebraic statistics and geometry of HMMs with any number of hidden states as well.

\bibliographystyle{chicago}
\bibliography{critch-bhmm}{}
\end{document}